\documentclass{amsart}

\usepackage{amsmath}

\allowdisplaybreaks
\usepackage{setspace}
\usepackage{hyperref}
\usepackage{cleveref}
\usepackage{amsfonts}
\usepackage[all]{xy}
\usepackage{amssymb}
\DeclareMathOperator{\Ima}{Im}
\newcommand{\Aut}{{\rm Aut}}
\usepackage{xcolor}
\usepackage{graphicx} 

\usepackage{xfrac}
\usepackage{amsfonts, bm}

\newcommand{\N}{{\mathbb N}}

\newcommand{\Z}{{\mathbb Z}}
\newcommand{\R}{{\mathbb R}}

\newcommand{\G}{{\mathcal G}}
\newcommand{\K}{{\mathcal K}}
\newcommand{\Hm}{{\mathbb H}}
\newcommand{\D}{{\mathfrak D}}
\newcommand{\Hk}{{\mathfrak H}}
\newcommand{\Hc}{{\mathcal H}}

\newcommand{\du}{{\mathrm d\,}}
\newcommand{\Ad}{\mathrm{Ad\,}}
\newcommand{\Id}{\mathrm{Id}}

\newcommand{\GL}{\mathrm{GL}}
\newcommand{\SL}{\mathrm{SL}}
\newcommand{\PSL}{\mathrm{PSL}}
\newcommand{\PGL}{\mathrm{PGL}}
\newcommand{\SO}{\mathrm{SO}}
\newcommand{\Fix}{\mathrm{Fix}}

\let\ol=\overline
\let\ap=\alpha

\let\mi=\setminus

\newtheorem{thm}{Theorem}[section]
\newtheorem{cor}[thm]{Corollary}
\newtheorem{lem}[thm]{Lemma}
\newtheorem{prop}[thm]{Proposition}

\newtheorem{example}[thm]{Example}
\newtheorem{remark}[thm]{Remark}

\begin{document}

\title[Twisted conjugacy classes in Lie groups]{TWISTED CONJUGACY CLASSES IN LIE GROUPS}

\author{Ravi Prakash}
\address{Ravi Prakash\\
School of Physical Sciences\\
Jawaharlal Nehru University\\
New Delhi 110 067}

\email{raviprakashg0@gmail.com}

\author{Riddhi Shah}\
\address{Riddhi Shah\\
School of Physical Sciences\\
Jawaharlal Nehru University\\
New Delhi 110 067}

\email{rshah@jnu.ac.in, riddhi.kausti@gmail.com}

\begin{abstract}
 We consider twisted conjugacy classes of continuous automorphisms $\varphi$ of a Lie group $G$. We obtain 
 a necessary and sufficient condition on $\varphi$ for its Reidemeister number, the number of twisted conjugacy classes,  
 to be infinite when $G$ is connected and solvable or compactly generated and nilpotent. We also show 
 for a general connected Lie group $G$ that the number of conjugacy classes is infinite. We prove that for 
 a connected non-nilpotent Lie group $G$, there exists $n\in\N$ such that Reidemeister number of $\varphi^n$ is infinite for every 
 $\varphi$. We say that $G$ has topological $R_\infty$-property if  the Reidemeister number of every $\varphi$ is infinite. We
obtain conditions on a connected solvable Lie group under which it has topological $R_\infty$-property; which, in particular, enables 
 us to prove that the group of invertible $n\times n$ upper triangular real matrices and its quotient group modulo its center have 
  topological $R_\infty$-property for every $n\ge 2$. We also prove that the Walnut group also has this property. We show that  
  $\SL(2,\R)$ and $\GL(2,\R)$ have topological $R_\infty$-property, and construct many examples of Lie groups with this property.
  
\end{abstract}
  
\maketitle
\noindent {\em 2020 Mathematics Subject Classification}. Primary: 22E15, 22D45. Secondary: 22E25.

\smallskip
\noindent{\bf Keywords:} Lie groups, Twisted conjugacy classes of automorphisms, Reidemeister number, Topological $R_\infty$-property.

\tableofcontents
\section{Introduction}

For an automorphism $\varphi$ of a group $G$, a relation 
 $\sim_\varphi$ on $G$ is defined as follows: for $x,y\in G$, $x \sim_\varphi y$, if $y=g x \varphi(g^{-1})$ for some $g \in G$. 
 Then $\sim_\varphi$ is an equivalence relation, and $x, y \in G$ are said to be 
 $\varphi$-twisted conjugate if $x\sim_\varphi y$. The equivalence classes with respect to this 
 relation are called $\varphi$-twisted conjugacy classes or Reidemeister classes of $\varphi$. 
 The $\varphi$-twisted conjugacy classes are the usual conjugacy classes if $\varphi$ is the 
 identity automorphism. We denote the $\varphi$-twisted conjugacy class containing $x \in G$
  by $[x]_\varphi$, and the set of all $\varphi$-twisted conjugacy classes by 
  $\mathcal{R}(\varphi):=\{[x]_\varphi\mid x \in G\}$. The cardinality of 
  $\mathcal{R}(\varphi)$, denoted by $R(\varphi)$, is called the Reidemeister number of $\varphi$. 
  A group $G$ is said to have $R_{\infty}$-property if $R(\varphi)=\infty$ for all $\varphi \in   \Aut(G)$.
  
Twisted conjugacy has its origin in Nielsen-Reidemeister fixed point theory 
and it also appears in other areas of Mathematics such as Algebraic geometry, Dynamical systems, 
Representation theory and Number theory; see Fel'shtyn \cite{F}, Fel'shtyn and Troitsky \cite{FT1}, Gon\c{c}alves 
et al \cite{GSaW} and the references cited therein. To determine which classes of groups have $R_{\infty}$-property 
is an active research area, which was initiated by Fel'shtyn and Hill \cite{FH}. We refer the reader to \cite{FT2, Se} and 
the references cited therein for more details.

Twisted conjugacy classes for abelian groups were studied by Dekimpe and Gon\c{c}alves in \cite{DeG}, and 
for nilpotent groups by Gon\c{c}alves and Wong  in \cite{GW}, and for polycyclic groups by Wong in \cite{W}.

For a (Hausdorff) topological group $G$, let $\Aut(G)$ denote the group of automorphisms of $G$ which are homomorphisms 
and homeomorphisms. If $G$ is any group without a topology, we assume that $G$ is endowed with the discrete topology, and 
any bijective map on $G$ is a homeomorphism, and in particular, any abstract automorphism of $G$ belongs to $\Aut(G)$.

In this paper, we study $R(\varphi)$ for continuous automorphisms $\varphi$ of (real) Lie groups and obtain conditions under which $R(\varphi)$ is finite 
or infinite. Twisted conjugacy classes in complex semisimple Lie groups appear in the work of Gantmakher \cite{Ga} in 1939; see Springer \cite{Sp}, 
see also Steinberg \cite{St}, and they have been studied by many mathematicians, see Mohrdieck and Wendt \cite{MoW}, 
Dekimpe and Pennincks \cite{DeP}, Lins de Araujo and Santos Rego \cite{LR1}, and the references cited therein. In recent years they have been studied 
on $\R^n$, and more generally on simply connected nilpotent Lie groups \cite{DeP}. Generalising some of the earlier results, we get necessary  and 
sufficient conditions for the set $\mathcal{R}(\varphi)$ to be finite or infinite for automorphisms $\varphi$ of connected solvable Lie groups as follows:

\medskip
\noindent{\bf \Cref{Solv}.}
\emph{Let $G$ be a connected solvable Lie group and $\G$ be the Lie algebra of $G$. Let $\varphi \in  \Aut(G)$ and $\du\varphi$ be the 
corresponding Lie algebra automorphism of $\G$. Then either $R(\varphi)=1$ or $R(\varphi)=\infty$, and the following 
statements are equivalent:
\begin{enumerate}
    \item[$(1)$] $R(\varphi) = \infty$.
    \item[$(2)$] $1$ is an eigenvalue of $\du\varphi$.
    \item[$(3)$] $|\Fix(\varphi)|=\infty$, where $\Fix(\varphi)$ is the set of fixed points of $\varphi$.
    \end{enumerate}}

\medskip
Using \Cref{Solv} and some known results on torsion-free finitely generated nilpotent groups, we get a characterisation for compactly 
generated nilpotent Lie groups as follows.

\medskip
\noindent{\bf \Cref{Cptgenilp}.}
 \emph{Let $G$ be a compactly generated nilpotent Lie group and $\varphi\in\Aut(G)$. Then the following holds:
$R(\varphi) = \infty\mbox{ if and only if }|\Fix(\varphi)|= \infty$.}

\medskip
For an automorphism $\varphi$ of a compactly generated solvable Lie group, $R(\varphi)=\infty$ if $|\Fix(\varphi)|=\infty$ 
(see \Cref{Cptgensolv}). As for a general connected Lie group $G$ which is not nilpotent, we have that $\varphi^n$ has infinitely many 
Reidemeister classes for every $\varphi\in\Aut(G)$, for some $n\in\N$. More generally, we have the following:

\medskip
\noindent{\bf \Cref{ConnLie}.}
\emph{Let $G$ be a nontrivial connected Lie group. Then the number of conjugacy classes in $G$ is infinite. Furthermore, if $G$ is not 
nilpotent, then there exists a subgroup $L$ of finite index in $\Aut(G)$ such that $R(\alpha)=\infty$ for every 
 $\alpha\in L$; in particular, there exists $n \in \N$ such that $R(\varphi^n) = \infty$ for every $\varphi\in\Aut(G)$.}

\medskip
A topological group $G$ is said to have {\it topological} $R_\infty$-{\it property} if $R(\varphi)=\infty$ for every automorphism 
(homomorphism and homeomorphism) $\varphi$ of $G$. 
Since any group can be endowed with the discrete topology, the definition of topological $R_\infty$-property is a natural extension  
of the definition of $R_\infty$-property. There has been considerable study of $R_\infty$-property for linear groups, and also of 
algebraic $R_\infty$-property, which is for morphisms on algebraic groups (see Nasybullov \cite{N1, N2}, 
Bhunia and Bose \cite{BB1, BB2} and the references cited therein).

Note that $\R^n$, $n\in\N$, and many nilpotent non-abelian groups do not have (topological) $R_\infty$-property (see \cite{N2}). 
As for connected solvable Lie groups $G$, we have a necessary and sufficient condition for $R(\varphi)$ to be infinity in \Cref{Solv} 
for $\varphi\in\Aut(G)$. So the question arises whether there are connected solvable Lie groups with this property, i.e.\ each of its 
(continuous) automorphism has infinite Reidemeister classes. The answer is affirmative for many solvable Lie groups. 
We obtain sufficient conditions on connected solvable Lie groups to have topological 
$R_\infty$-property (see \Cref{Solv-infty}). This enables us to prove the following:

\medskip
\noindent{\bf \Cref{upper}.} 
\emph{The group of invertible $n\times n$ upper triangular real matrices has topological $R_\infty$-property for every $n\ge 2$.}
\medskip

For the group as in \Cref{upper}, its quotient group modulo its center also has topological $R_\infty$-property (see \Cref{pbn}). The 
solvable (non-nilpotent) Lie groups $\R^*\ltimes\R$ and $\R\ltimes\R$ have topological $R_\infty$-property 
(see \Cref{axpb}). But $\mathbb{S}^1\ltimes\R^2$ and certain groups of the form $\R\ltimes\R^2$ do not have this property 
(see \Cref{non-ex}). We also show that the Walnut group has this property (see \Cref{walnut}).

In case of semisimple Lie groups, it is known that for $n\ge 3$, $\SL(n,\R)$ and $\GL(n,\R)$ have $R_\infty$-property (cf.\ \cite{N1}). We prove the 
remaining case $n=2$ in the following:

\medskip 
\noindent{\bf \Cref{SL2}.}
\emph{$\SL(2,\R)$ and $\GL(2,\R)$ have topological $R_\infty$-property.}

\medskip
As a consequence, we get some examples of general Lie groups with topological $R_\infty$-property e.g.\ 
$\widetilde{\SL(2,\R)}$, the universal cover of $\SL(2,\R)$ and also of $\PSL(2,\R)$, more generally see \Cref{PSL2}.

We may note here that discrete linear groups $\GL(n,\Z)$ as well as $\SL(n,\Z)$ and $\PSL(n,\Z)$ have $R_\infty$-property for 
$n\geq 2$; see Mubeena and Sankaran \cite{MuSa1}, see also \cite{MuSa2, MuSa3} for this property for some types of 
lattices and certain discrete subgroups in certain semisimple Lie groups.

\medskip
\noindent{\bf Notations:}  From \S 3 onwards, $G$ is a (real) Lie group and $\Aut(G)$ is the group of continuous automorphisms of $G$. 
Note that any abstract automorphism of a Lie group which is continuous is open, and hence a homeomorphism. Let $G^0$ denote 
the connected component of the identity $e$ in $G$, it is a closed characteristic subgroup of $G$. Let $\G$ denote the Lie algebra 
of a connected Lie group $G$, and let $\exp:\G\to G$ denote the exponential map which is continuous and its restriction to a small 
neighbourhood of $0$ in $\G$ is a homeomorphism onto a small neighbourhood of the identity in $G$. For $\varphi\in\Aut(G)$, there 
is a corresponding Lie algebra automorphism $\du\varphi$ such that $\exp\circ\,\du\varphi=\varphi\circ\,\exp$. The map 
$\du:\Aut(G)\to \GL(\G)$ is an injective homomorphism and its image is closed, and $\Aut(G)$ is isomorphic to its image in $\GL(\G)$. 
For $g\in G$, let $i_g$ denote the inner automorphism by $g$, and let $\Ad g=\du(i_g)$. Then $\Ad:G\to\GL(\G)$ is a (continuous) 
homomorphism. For a connected Lie group $G$, let $R$ denote the radical of $G$ and let $N$ denote the nilradical of $G$, where 
the radical $R$ (resp. the nilradical $N$) is the maximal connected solvable (resp.\ nilpotent) normal subgroup of $G$. Either $G$ is 
solvable or $G/R$ is semisimple (i.e.\ its radical is trivial). Any closed subgroup $H$ of $G$ is a Lie group with respect to the subspace 
topology on it. Let $[H,H]$ denote the commutator subgroup of $H$, and it is normal (resp.\ characteristic) in $G$ if $H$ is so. For 
a connected solvable Lie group $G$, the commutator subgroup $[G,G]$ is nilpotent (see e.g.\ \cite{MSR}). Any connected Lie group 
$G$ admits a maximal compact connected central subgroup $K$ which is contained in the nilradical $N$ and $G/K$ has no 
nontrivial compact connected central subgroups. Lie groups are locally compact and metrisable, and a connected locally compact Hausdorff 
group is a Lie group $G$ if it admits a neighbourhood of the identity $e$ which has no nontrivial compact subgroups in it. For a closed subgroup 
$H$ of $G$, if it is normal, the corresponding quotient group $G/H$ is also a Lie group. If $G$ and $H$ are connected, 
then $H$ has a corresponding Lie algebra $\Hc$ which is a subalgebra of the Lie algebra $\G$ of $G$, and in case $H$ is normal, 
then $\G/\Hc$ is the Lie algebra of $G/H$.  For the structure and properties of Lie groups,  we refer the reader to Hochschild \cite{H}. 
For subgroups $H$ and $L$ of $G$, let $[H,L]$ denote the subgroup generated by $\{hlh^{-1}l^{-1}\mid h\in H, l\in L\}$. For any set $A$, 
let $|A|$ denote the cardinality of the set $A$.

In \S 2, we review some useful results. In \S 3, we discuss results on solvable Lie groups and prove Theorems \ref{Solv} and \ref{Cptgenilp}. 
In \S 4, we discuss Reidemeister number for automorphisms of a general connected Lie group. In \S 5, we obtain sufficient conditions 
under which a  connected solvable Lie group has topological $R_\infty$-property in \Cref{Solv-infty} and prove \Cref{upper}. We also prove this 
property for $\SL(2,\R)$ and $\GL(2,\R)$ in \Cref{SL2}, and obtain several examples of solvable and non-solvable Lie groups with 
topological $R_\infty$-property.

\section{Preliminaries}

In this section, for a group $G$ with the identity $e$, and $\Aut(G)$ the group of all (abstract) automorphisms of $G$, 
we state some known results and other elementary results which will be useful. 
We first recall two elementary well-known statements.

\begin{lem} \label{inner} 
Let $G$ be a group and let $\varphi \in\Aut(G)$. For every $g\in G$, $R(i_g \circ \varphi)=R(\varphi)$, 
where $i_g$ is the inner automorphism by $g$. In particular, $R(i_g)=R(\Id)$, $g\in G$, where $\Id$ denotes the identity map on $G$. 
 \end{lem}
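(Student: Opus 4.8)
The plan is to exhibit an explicit bijection between the $\varphi$-twisted conjugacy classes and the $(i_g\circ\varphi)$-twisted conjugacy classes, which then forces the two cardinalities to agree. Writing $\psi = i_g\circ\varphi$, the defining relation reads $x\sim_\psi y$ if and only if $y = h\,x\,\psi(h^{-1})$ for some $h\in G$, and since $\psi(h^{-1}) = g\,\varphi(h^{-1})\,g^{-1}$, the extra factor $g$ produced by $i_g$ should be absorbable by a single fixed translation on $G$.

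Concretely, I would set $f\colon G\to G$, $f(x) = x g^{-1}$, which is a bijection since right translation is. The key step is to check that $f$ is compatible with both relations: for $x,y\in G$ and any $h\in G$,
$$
h\,f(x)\,\psi(h^{-1}) = h\,x g^{-1}\cdot g\,\varphi(h^{-1})\,g^{-1} = \bigl(h\,x\,\varphi(h^{-1})\bigr)g^{-1},
$$
so that $f(y) = h\,f(x)\,\psi(h^{-1})$ holds (for a given $h$) precisely when $y = h\,x\,\varphi(h^{-1})$. Hence $x\sim_\varphi y$ if and only if $f(x)\sim_\psi f(y)$, with the same witnessing element $h$ serving in both directions.

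Since $f$ is a bijection of $G$ that carries $\sim_\varphi$ onto $\sim_\psi$ in both directions, it descends to a well-defined bijection $\mathcal{R}(\varphi)\to\mathcal{R}(\psi)$ at the level of classes, and therefore $R(\varphi) = R(i_g\circ\varphi)$. The final assertion follows by taking $\varphi = \Id$, which gives $R(i_g) = R(i_g\circ\Id) = R(\Id)$. There is no serious obstacle here; the only point requiring care is choosing the correct side for the translation (right by $g^{-1}$ rather than left), so that the conjugating factor $g$ cancels cleanly against $g^{-1}$.
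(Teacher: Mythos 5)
Your proof is correct: the right translation $f(x)=xg^{-1}$ does carry $\sim_\varphi$ to $\sim_{i_g\circ\varphi}$ with the same witness $h$, and this is the standard argument. The paper states this lemma without proof (as a recalled well-known fact), so there is nothing to compare beyond noting that your verification matches the usual one.
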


\begin{lem} \label{endo} 
Let $G$ be a group, $\varphi \in\Aut(G)$ and let $H$ be a $\varphi$-invariant normal subgroup of $G$.
Let  $\bar{\varphi}$ denote the automorphism of $G /H$ induced by $\varphi$. Then the following hold:
$R(\varphi) \geq R(\bar{\varphi})$. In particular, if $R(\bar{\varphi})=\infty$, then $R(\varphi)=\infty$.  
 If $H$ is characteristic in $G$ and $G /H$ has the $R_{\infty}$-property, then so does $G$.
\end{lem}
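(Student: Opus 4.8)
The plan is to realise both assertions through the quotient homomorphism $q\colon G\to G/H$ and the map it induces on twisted conjugacy classes. First I would record the compatibility of $q$ with the two automorphisms: since $H$ is normal and $\varphi$-invariant, the induced map $\bar\varphi$ on $G/H$ is well-defined and is characterised by the commuting relation $q\circ\varphi=\bar\varphi\circ q$. This is the only structural input needed, and it is immediate from the definition of $\bar\varphi$.

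Next I would show that $q$ sends $\varphi$-twisted conjugate elements to $\bar\varphi$-twisted conjugate elements, so that $[x]_\varphi\mapsto[q(x)]_{\bar\varphi}$ defines a map $\mathcal{R}(\varphi)\to\mathcal{R}(\bar\varphi)$. Indeed, if $y=g\,x\,\varphi(g^{-1})$ for some $g\in G$, then applying $q$ and using $q\circ\varphi=\bar\varphi\circ q$ gives $q(y)=q(g)\,q(x)\,\bar\varphi(q(g)^{-1})$, so $q(x)\sim_{\bar\varphi}q(y)$; hence the assignment is independent of the chosen representative and is well-defined on classes. This map is surjective because $q$ is surjective: given any class $[\bar x]_{\bar\varphi}$ in $G/H$, choosing a preimage $x\in G$ of $\bar x$ yields a class $[x]_\varphi$ mapping onto it. A surjection $\mathcal{R}(\varphi)\twoheadrightarrow\mathcal{R}(\bar\varphi)$ forces $R(\varphi)=|\mathcal{R}(\varphi)|\ge|\mathcal{R}(\bar\varphi)|=R(\bar\varphi)$, and the ``in particular'' statement follows at once, since $R(\bar\varphi)=\infty$ then gives $R(\varphi)=\infty$.

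For the final assertion, I would use that when $H$ is characteristic every $\varphi\in\Aut(G)$ leaves $H$ invariant, and so does $\varphi^{-1}$; therefore $\bar\varphi$ is not merely an endomorphism but a genuine automorphism of $G/H$, with inverse $\overline{\varphi^{-1}}$. If $G/H$ has the $R_\infty$-property, then $R(\bar\varphi)=\infty$ for this $\bar\varphi$, whence $R(\varphi)=\infty$ by the inequality just established. As $\varphi\in\Aut(G)$ was arbitrary, $G$ has the $R_\infty$-property.

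There is no genuine obstacle here; the argument is elementary. The only points requiring care are verifying that the class map is well-defined (that it respects the equivalence relation $\sim_\varphi$, handled by the displayed computation) and, for the last part, confirming that characteristicity of $H$ upgrades $\bar\varphi$ to an automorphism rather than just an endomorphism of $G/H$, which is what licenses invoking the $R_\infty$-property of the quotient.
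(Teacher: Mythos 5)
Your proof is correct and is exactly the standard argument the paper has in mind: it states this lemma as an elementary well-known fact and gives no proof, relying implicitly on the surjection $\mathcal{R}(\varphi)\twoheadrightarrow\mathcal{R}(\bar{\varphi})$ induced by the quotient map, which is what you construct. Both the well-definedness check via $q\circ\varphi=\bar{\varphi}\circ q$ and the observation that characteristicity of $H$ is what makes $\bar{\varphi}$ an automorphism for an arbitrary $\varphi\in\Aut(G)$ are the right points to flag, and there is nothing to add.
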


For $\varphi\in\Aut(G)$, let $\Fix(\varphi)=\{x\in G\mid \varphi(x)=x\}$, the set of fixed points of $\varphi$ in $G$, and $|\Fix(\varphi)|$ 
is the cardinality of $\Fix(\varphi)$.  Note that $\Fix(\varphi)$ is a subgroup of $G$. In the following lemma, statements (1) and (5) are known. 
We give a short proof for the sake of completeness. 
 
 \begin{lem}{\label{inv}} Let $G$ be a group and $\varphi \in \Aut(G)$. 
 Let $H$ be a normal $\varphi$-invariant subgroup of $G$.  Let $\varphi'= \varphi|_H$ and $\bar \varphi$ 
 be the automorphism of $G/H$ induced by $\varphi$. 
 Then the following hold:
 \begin{enumerate}
\item[{$(1)$}] If $R(\varphi')=\infty$ and $|\Fix(\bar\varphi)|<\infty$, then $R(\varphi) = \infty$. 
\item[{$(2)$}] If $R(\varphi')=1$, then $|\Fix(\bar\varphi)|\le |\Fix(\varphi)|$. 
\item[{$(3)$}] If $R(\bar\varphi)=1$, then $R(\varphi)\leq R(\varphi')$. In particular, if $R(\varphi')=R(\bar\varphi)=1$, then \hfill\break $R(\varphi)=1$.
\item[{$(4)$}] If $H$ is finite, then $R(\varphi)<\infty$ if and only if $R(\bar\varphi)<\infty$.
\item[{$(5)$}] If $H$ is central in $G$ and $R(\varphi')<\infty$ and $R(\bar\varphi)<\infty$, then $R(\varphi)<\infty$. 
\end{enumerate}
\end{lem}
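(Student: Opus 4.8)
The plan is to treat all five parts through a single device coming from the short exact sequence $1\to H\to G\to G/H\to 1$. Writing $\pi\colon G\to G/H$ for the quotient map, \Cref{endo} gives a surjection $\pi_*\colon\mathcal{R}(\varphi)\to\mathcal{R}(\bar\varphi)$, $[x]_\varphi\mapsto[\pi(x)]_{\bar\varphi}$, and I would first pin down its fibres. The key preliminary observation is that a class $[x]_\varphi$ lies in $\pi_*^{-1}([\bar e]_{\bar\varphi})$ if and only if it has a representative in $H$: if $\pi(x)\sim_{\bar\varphi}\bar e$ then $\overline{gx\varphi(g)^{-1}}=\bar e$ for a suitable $g$, so $gx\varphi(g)^{-1}\in H$. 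Thus the inclusion $H\hookrightarrow G$ induces $\iota_*\colon\mathcal{R}(\varphi')\to\mathcal{R}(\varphi)$ whose image is exactly this fibre. To measure how far $\iota_*$ is from injective, I would introduce the action of $\Fix(\bar\varphi)$ on $\mathcal{R}(\varphi')$ given by $\bar g\cdot[h]_{\varphi'}=[ghg^{-1}\cdot g\varphi(g)^{-1}]_{\varphi'}$; here $ghg^{-1}\in H$ by normality and $g\varphi(g)^{-1}\in H$ precisely because $\bar g\in\Fix(\bar\varphi)$, so the formula lands in $H$ and descends to classes. A short computation then shows that for $h_1,h_2\in H$ one has $h_1\sim_\varphi h_2$ in $G$ if and only if $[h_1]_{\varphi'}$ and $[h_2]_{\varphi'}$ lie in the same $\Fix(\bar\varphi)$-orbit, so that $\pi_*^{-1}([\bar e]_{\bar\varphi})$ is in bijection with $\mathcal{R}(\varphi')/\Fix(\bar\varphi)$.

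With this in hand, parts (1) and (3) are immediate. For (1), every $\Fix(\bar\varphi)$-orbit has size at most $|\Fix(\bar\varphi)|<\infty$, so $\mathcal{R}(\varphi')/\Fix(\bar\varphi)$ is infinite when $R(\varphi')=\infty$; hence the single fibre $\pi_*^{-1}([\bar e]_{\bar\varphi})$ is already infinite and $R(\varphi)=\infty$. For (3), $R(\bar\varphi)=1$ forces $\pi_*$ to have only the one fibre over $[\bar e]_{\bar\varphi}$, which is the image of $\iota_*$; therefore $R(\varphi)\le R(\varphi')$, and the ``in particular'' follows at once. For (2), assuming $R(\varphi')=1$ I would show that the restriction $\pi|_{\Fix(\varphi)}\colon\Fix(\varphi)\to\Fix(\bar\varphi)$ is surjective, which gives $|\Fix(\bar\varphi)|\le|\Fix(\varphi)|$. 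Given $\bar g\in\Fix(\bar\varphi)$, set $k=g^{-1}\varphi(g)\in H$; since $R(\varphi')=1$ we have $k\sim_{\varphi'}\bar e$, so $k=a\varphi'(a)^{-1}$ for some $a\in H$, and then $x=ga$ satisfies $\varphi(x)=x$ and $\pi(x)=\bar g$.

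For (4) and (5) I would bound the size of an arbitrary fibre $\pi_*^{-1}([\bar y]_{\bar\varphi})$. Choosing a representative $y$, every class in this fibre has a representative in the coset $yH$, obtained by twisting $x$ by a suitable $g$ to arrange $\bar x=\bar y$. When $H$ is finite this already bounds the fibre by $|H|$, so with $R(\bar\varphi)<\infty$ we get $R(\varphi)\le R(\bar\varphi)\,|H|<\infty$, proving (4); the reverse implication is \Cref{endo}. When $H$ is central I would refine this: using that $H$ is abelian and central, the elements $yh_1,yh_2$ are $\varphi$-conjugate in $G$ exactly when $h_2h_1^{-1}$ lies in a subgroup $T\le H$ containing $\{a\varphi'(a)^{-1}\mid a\in H\}$, whence the fibre is a quotient of $\mathcal{R}(\varphi')$ and has at most $R(\varphi')$ elements; together with $R(\bar\varphi)<\infty$ this yields $R(\varphi)\le R(\bar\varphi)\,R(\varphi')<\infty$, proving (5).

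The well-definedness and surjectivity statements are routine and I would dispatch them by direct coset manipulation. The main obstacle is the analysis of a general fibre $\pi_*^{-1}([\bar y]_{\bar\varphi})$ needed for (4) and (5): one must check that the twisted stabiliser $S=\{\bar g\mid \bar g\,\bar y\,\bar\varphi(\bar g)^{-1}=\bar y\}$ is a subgroup and, in the central case, that $\bar g\mapsto\big(y^{-1}gy\varphi(g)^{-1}\big)\bmod\{a\varphi'(a)^{-1}\}$ is a well-defined homomorphism $S\to\mathcal{R}(\varphi')$ whose image controls the fibre. Centrality of $H$ is exactly what makes this assignment multiplicative, so that the fibre becomes a coset space of $\mathcal{R}(\varphi')$ rather than merely a set of bounded size; getting these book-keeping identities right, rather than any conceptual difficulty, is where the care is required.
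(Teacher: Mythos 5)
Your proposal is correct and follows essentially the same route as the paper: all five parts are proved by analysing the surjection $\mathcal{R}(\varphi)\to\mathcal{R}(\bar\varphi)$ and the map $\mathcal{R}(\varphi')\to\mathcal{R}(\varphi)$, with (1) resting on the same counting of elements of $\Fix(\bar\varphi)$, (2) on the same lift $ga$ of a fixed point, and (4)--(5) on the same bounds $R(\varphi)\le R(\bar\varphi)\,|H|$ and $R(\varphi)\le R(\bar\varphi)\,R(\varphi')$. The only difference is cosmetic: you package the identity-fibre as an orbit space $\mathcal{R}(\varphi')/\Fix(\bar\varphi)$, whereas the paper carries out the same computations part by part without naming the action.
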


\begin{proof} Let $\pi:G\to G/H$ be the natural projection and let $\bar{y}:=\pi(y)$, $y\in G$.

\smallskip
\noindent{\bf (1):} Suppose $R(\varphi')=\infty$. Let $\{x_\alpha\}\subset H$ be such that 
$[x_\alpha]_{\varphi'}$ are infinitely many distinct $\varphi'$-twisted conjugacy classes in $H$.
If possible, suppose $R(\varphi)<\infty$. Then there exists $n$ distinct $\varphi$-twisted conjugacy classes for 
$y_1,\ldots, y_n$ in $G$. Thus $[x_\alpha]_\varphi=[y_j]_\varphi$ for some fixed $j$ for infinitely many $\alpha$, i.e.\ 
$x_\alpha=g_\ap x_\beta\varphi(g_\alpha^{-1})$ for infinitely many $\alpha$, where $x_\beta$ is fixed. 
Since all $[x_\alpha]_{\varphi'}$ are distinct, we have that $g_\alpha\not\in H$ for infinitely many $\alpha$. Now 
$$
\bar{e}=\bar{x}_\alpha=\bar{g}_\ap\bar{x}_\beta\bar\varphi(\bar{g}_\alpha^{-1})=\bar{g}_\alpha\bar\varphi(\bar{g}_\alpha^{-1}).$$ 
Thus $\bar{g}_\ap\in \Fix(\bar\varphi)$ for infinitely many $\alpha$. As $[x_\alpha]_{\varphi'}$ are distinct, it follows that 
$\bar{g}_\ap$ are also distinct. This leads to a contradiction, hence $R(\varphi)=\infty$.

\smallskip
\noindent{\bf (2):} Suppose $R(\varphi')=1$. Let $g\in G$ be such that $\bar g\in\Fix(\bar\varphi)$, i.e.\ $\bar\varphi(\bar g)=\bar g$. 
Then $\varphi(g)=gx$ for some $x\in H$. As $R(\varphi')=1$, $H=[e]_{\varphi'}$, and there exists $a\in H$ such that 
$x=a\varphi(a^{-1})$. Then $\varphi(ga)=ga$, and hence $ga\in\Fix(\varphi)$. Thus $\pi(\Fix(\varphi))=\Fix(\bar\varphi)$, 
and hence $|\Fix(\bar\varphi)|\le|\Fix(\varphi)|$.

\smallskip
\noindent{\bf (3):} Now suppose $R(\bar\varphi)=1$. Let $x\in G$. Then $\pi(x)\in \pi(G)=[\bar e]_{\bar\varphi}$, and 
there exists $g\in G$ such that $\bar{x}=\bar{g}\bar\varphi(\bar{g}^{-1})$. As $H$ is normal in $G$, we get that 
$x\in g\varphi(g^{-1})H=gH\varphi(g^{-1})$. Therefore $x=gh\varphi(g^{-1})$ for some $h\in H$. Hence $[x]_\varphi=[h]_\varphi$ 
for some $h\in H$. Thus $R(\varphi)\le R(\varphi')$. The second assertion follows from the first.

\smallskip
\noindent{\bf (4):} Suppose $R(\bar\varphi)<\infty$. Let $x_1,\ldots, x_m\in G$ be such that $G/H=\cup_i [\bar x_i]_{\bar\varphi}$. 
As $H$ is finite, $H:=\{y_1,\ldots, y_n\}\subset G$. As $H$ is normal in $G$, we have that $G=\cup_{i,j}[x_iy_j]_\varphi$. 
Therefore, $R(\varphi)\le mn<\infty$. The converse follows from \Cref{endo}.

\smallskip
\noindent{\bf (5):} Suppose $H$ is central in $G$. Suppose also that $R(\bar\varphi)=m<\infty$ and 
$R(\varphi')=n<\infty$. Let $x_1,\ldots, x_m\in G$ be such that $G/H=\cup_{i=1}^m [\bar x_i]_{\bar\varphi}$, 
and let $y_1,\ldots, y_n\in H$ be such that $H=\cup_{j=1}^n[y_j]_{\varphi'}$. Then for any $g\in G$, 
$\bar g\in [\bar{x}_i]_{\bar\varphi}$ for some $x_i$. Then 
$$
gH\subset [x_i]_\varphi H\subset  [x_i]_\varphi (\cup_{j=1}^n[y_j]_{\varphi'})\subset \cup_{j=1}^n[x_iy_j]_\varphi,$$ 
as $H$ is central in $G$. Therefore, $G=\cup_{i,j} [x_iy_j]_\varphi$. Hence, $R(\varphi)\le mn <\infty$. 
\end{proof}

If $G$ is a compact connected abelian Lie group, i.e.\ $G = {\mathbb{T}}^n = ({{\mathbb{S}}^1})^n$ for some $n\in\N$, and 
if $\Aut(G)$ consists of continuous automorphisms of $G$, then 
$\GL_n(\mathbb{Z})$ can be identified with $\Aut(G)$ via $A=\left(a_{i j}\right) \mapsto \varphi_A$; 
where $\varphi_A\in\Aut(G)$ is defined as follows: 
$\varphi_A((t_1, \ldots, t_n))=(s_1, \ldots, s_n)$, with $s_i=\prod_{j=1}^n t_j^{a_{i j}}$, for all  $t_i \in \mathbb{S}^{1}$, $1 \leq i \leq n$.

We recall Theorem 2.2 of \cite{BB2}, which is valid for any continuous automorphism of a torus and will be useful for extending some results 
on simply connected nilpotent Lie groups to connected nilpotent Lie groups.

\begin{thm}{\rm\cite[Theorem 2.2]{BB2}} \label{Torus}
Let $G$ be an $n$-dimensional torus ${\mathbb{T}}^n$ for some $n\in\N$. Then the following are equivalent for any $\varphi \in  \Aut(G)$.
\begin{enumerate}
    \item $R(\varphi)=1$.
    \item $\det(A-\Id) \neq 0$, where $A \in \GL_n(\Z)$ is such that $\varphi=\varphi_A$.
    \item $\Fix(\varphi)$ is finite.
\end{enumerate}
\end{thm}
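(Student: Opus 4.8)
The plan is to work with the standard model $\mT^n=\R^n/\Z^n$ and to reduce everything to the single integer matrix $B:=A-\Id$ and the endomorphism $\ol B$ of $\mT^n$ that it induces. Writing the group law additively, the automorphism $\varphi=\varphi_A$ is $x+\Z^n\mapsto Ax+\Z^n$, so the twisted conjugacy relation $y=g+x-\varphi(g)$ becomes $y=x-(A-\Id)g=x-\ol B(g)\pmod{\Z^n}$. Hence the $\varphi$-twisted conjugacy class of $x$ is the coset $x+\Ima(\ol B)$ (note $\Ima(\ol B)$ is a subgroup, since $\ol B$ is a homomorphism), the set $\mathcal R(\varphi)$ is the quotient group $\mT^n/\Ima(\ol B)$, and $R(\varphi)=[\mT^n:\Ima(\ol B)]$; in particular $R(\varphi)=1$ exactly when $\ol B$ is surjective. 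On the other hand, $\Fix(\varphi)=\{x+\Z^n:(A-\Id)x\in\Z^n\}=\ker(\ol B)$. Thus all three conditions are statements about the single map $\ol B$: surjectivity of $\ol B$ in (1), $\det B\ne0$ in (2), and finiteness of $\ker(\ol B)$ in (3).

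The heart of the argument is then a lemma on integer endomorphisms of the torus: for $B\in M_n(\Z)$ with induced $\ol B:\mT^n\to\mT^n$, the conditions \emph{$\ol B$ is surjective}, \emph{$\det B\ne0$}, and \emph{$\ker(\ol B)$ is finite} are equivalent. To prove it I would use that $B$ has integer entries, so both $B\R^n$ and $\ker B$ are \emph{rational} subspaces, and a rational subspace of dimension $r$ meets $\Z^n$ in a full-rank lattice. Consequently $\Ima(\ol B)=(B\R^n+\Z^n)/\Z^n\cong B\R^n/(B\R^n\cap\Z^n)$ is a subtorus of dimension $\operatorname{rank}(B)$, which equals $n$ (i.e.\ $\ol B$ is onto) precisely when $\det B\ne0$; and $\ker(\ol B)=\{x+\Z^n:Bx\in\Z^n\}$ contains the subtorus $(\ker B+\Z^n)/\Z^n$ of dimension $n-\operatorname{rank}(B)$, so it is finite exactly when $\ker B=0$, again precisely when $\det B\ne0$. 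When $\det B\ne0$ one even recovers $|\ker(\ol B)|=|\det B|$ via the isomorphism $B^{-1}\Z^n/\Z^n\cong\Z^n/B\Z^n$.

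Combining the two paragraphs yields all the equivalences at once, since each of (1)--(3) has been translated into a property of $\ol B$ and the lemma shows these properties are mutually equivalent. The main technical obstacle is the lemma, and specifically the input that the image and kernel of an integer matrix are rational subspaces, so that their images in $\mT^n$ are genuine subtori of the predicted dimension rather than merely dense subgroups; this is exactly what forces surjectivity and finiteness of the kernel to be governed by $\det B$ in the same way. Everything else is the bookkeeping of the first paragraph.
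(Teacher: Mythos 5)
Your argument is correct and complete: reducing everything to the integer matrix $B=A-\Id$, identifying $[x]_\varphi=x+\Ima(\ol B)$ and $\Fix(\varphi)=\ker(\ol B)$, and then using rationality of $B\R^n$ and $\ker B$ to show that surjectivity of $\ol B$, finiteness of $\ker(\ol B)$, and $\det B\ne 0$ are equivalent is the standard and essentially optimal route. Note that the paper itself offers no proof to compare against --- it imports this statement verbatim as Theorem 2.2 of the cited work of Bhunia and Bose --- so your write-up serves as a self-contained justification of a quoted result rather than an alternative to an in-paper argument.
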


\section{Twisted conjugacy classes in connected solvable Lie groups}

In this section we discuss Reidemeister number of any (continuous) automorphism of a connected, or more generally, compactly generated 
solvable Lie group. We first get equivalent conditions on the automorphism $\varphi$ for $R(\varphi)$ to be finite or infinite. From now on 
$\Aut(G)$ is the set of continuous automorphisms of a Lie group $G$. 
The following is essentially known for simply connected nilpotent Lie groups.

\begin{thm} \label{Solv}
Let $G$ be a connected solvable Lie group and $\G$ be the Lie algebra of $G$. Let $\varphi \in  \Aut(G)$ and $\du\varphi$ be the 
corresponding Lie algebra automorphism of $\G$. Then either $R(\varphi)=1$ or $R(\varphi)=\infty$, and the following 
statements are equivalent:
\begin{enumerate}
    \item[{$(1)$}] $R(\varphi) = \infty$.
    \item[{$(2)$}] $1$ is an eigenvalue of $\du\varphi$.
    \item[{$(3)$}] $|\Fix(\varphi)|=\infty$, where $\Fix(\varphi)$ is the set of fixed points of $\varphi$.
    \end{enumerate}    
\end{thm}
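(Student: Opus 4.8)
The plan is to prove the dichotomy together with all three equivalences by induction on $\dim G$, reducing the general connected solvable case to the two ``atomic'' cases in which the result is already available: simply connected nilpotent Lie groups, where the equivalence of $R(\varphi)=\infty$ with ``$1$ is an eigenvalue of $\du\varphi$'' is essentially known, and tori, where it is \Cref{Torus}. The glue between the inductive levels is furnished by \Cref{inv} and \Cref{endo}. Throughout, one direction of $(2)\Leftrightarrow(3)$ is immediate: if $X\in\G$ satisfies $\du\varphi(X)=X$, then $\varphi(\exp(tX))=\exp(t\,\du\varphi(X))=\exp(tX)$ for all $t$, so the one-parameter subgroup $\{\exp(tX)\}$ lies in $\Fix(\varphi)$; hence $(2)\Rightarrow(3)$. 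The reverse implication, equivalently the finiteness of $\Fix(\varphi)$ when $1$ is not an eigenvalue, is the delicate point and is obtained along the induction.

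First I would settle the connected nilpotent case. Let $K$ be the maximal compact connected central subgroup of $G$; it is a characteristic torus and $G/K$ is simply connected nilpotent. Write $\varphi'=\varphi|_K$ and let $\bar\varphi$ be the induced automorphism of $G/K$. Since the Lie algebra $\mathcal K$ of $K$ is a $\du\varphi$-invariant ideal, in an adapted basis $\du\varphi$ is block upper triangular with diagonal blocks $\du\varphi'$ and $\du\bar\varphi$, so $1$ is an eigenvalue of $\du\varphi$ if and only if it is an eigenvalue of $\du\varphi'$ or of $\du\bar\varphi$. If $1$ is an eigenvalue of neither, then \Cref{Torus} gives $R(\varphi')=1$ and the simply connected nilpotent case gives $R(\bar\varphi)=1$, so \Cref{inv}(3) yields $R(\varphi)=1$, while $\Fix(\varphi)$ maps to the finite group $\Fix(\bar\varphi)$ with kernel the finite group $\Fix(\varphi')=\Fix(\varphi)\cap K$, hence is finite. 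If $1$ is an eigenvalue of $\du\bar\varphi$, then $R(\bar\varphi)=\infty$ and \Cref{endo} forces $R(\varphi)=\infty$. In the remaining case $1$ is an eigenvalue of $\du\varphi'$ only, so $R(\varphi')=\infty$ by \Cref{Torus} and $\Fix(\bar\varphi)$ is finite, whence \Cref{inv}(1) gives $R(\varphi)=\infty$. This proves the statement for connected nilpotent $G$, and in particular for every connected abelian $G$.

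Next I would treat general connected solvable $G$ by the same mechanism, using the nilradical $N$ in place of $K$. Here $N$ is a characteristic connected nilpotent subgroup, its Lie algebra $\mathcal N$ is a $\du\varphi$-invariant ideal, and crucially $[G,G]\subseteq N$, so $G/N$ is connected abelian and is therefore already covered by the previous step. Setting $\varphi'=\varphi|_N$ and letting $\bar\varphi$ be induced on $G/N$, block triangularity again shows that $1$ is an eigenvalue of $\du\varphi$ exactly when it is one of $\du\varphi'$ or $\du\bar\varphi$. The three cases resolve precisely as before: if $1$ is an eigenvalue of neither, $R(\varphi')=R(\bar\varphi)=1$ and \Cref{inv}(3) gives $R(\varphi)=1$ with $\Fix(\varphi)$ finite; if $1$ is an eigenvalue of $\du\bar\varphi$, \Cref{endo} gives $R(\varphi)=\infty$; and if it is an eigenvalue of $\du\varphi'$ only, then $R(\varphi')=\infty$ and $\Fix(\bar\varphi)$ is finite, so \Cref{inv}(1) gives $R(\varphi)=\infty$. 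Note that \Cref{inv}(1),(3) and \Cref{endo} require only that $N$ be normal, not central, which is all that is available here.

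The step I expect to be the main obstacle is the passage beyond the simply connected case, that is, correctly accounting for the compact toral directions. Two points need care. First, one must check in each case that the subgroup and quotient data recombine to give precisely $R(\varphi)\in\{1,\infty\}$ rather than an intermediate value; this is exactly why the hypotheses of \Cref{inv}(1),(3) and \Cref{endo} must be verified case by case. Second, the finiteness of $\Fix(\varphi)$ when $1$ is not an eigenvalue amounts to ruling out an infinite discrete fixed set; this is guaranteed at the base of the induction—a $0$-dimensional fixed subgroup of a torus is finite by compactness, and that of a torsion-free simply connected nilpotent group is trivial—and then propagates upward through the finite-by-finite extensions appearing above, so that $(3)\Rightarrow(2)$ holds in general.
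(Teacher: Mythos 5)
Your overall strategy is the same as the paper's: reduce the connected nilpotent case to the torus (via the maximal compact central subgroup $K$) plus the simply connected nilpotent case, and then reduce the general solvable case to a nilpotent normal subgroup with abelian quotient, gluing the pieces with \Cref{endo} and \Cref{inv}(1),(3). (The paper uses $\overline{[G,G]}$ where you use the nilradical $N$; both are nilpotent characteristic subgroups with connected abelian quotient, so this is an immaterial variation.) The verification of $(2)\Rightarrow(3)$ via one-parameter subgroups and the kernel--image argument for finiteness of $\Fix(\varphi)$ are also as in the paper.

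There is, however, one genuine gap, in the toral base case. In your third subcase of the nilpotent step you assert that if $1$ is an eigenvalue of $\du\varphi'$ on the torus $K$ then ``$R(\varphi')=\infty$ by \Cref{Torus}.'' But \Cref{Torus} only states the equivalence of $R(\varphi')=1$ with $\det(A-\Id)\neq 0$ and with finiteness of $\Fix(\varphi')$; it does not assert the dichotomy $R(\varphi')\in\{1,\infty\}$, so from it you may conclude only $R(\varphi')\neq 1$. That is not enough to invoke \Cref{inv}(1), which requires $R(\varphi')=\infty$, and your induction has no other route to statement (1) in this subcase. The paper closes exactly this hole in its Step 2: for abelian $G$ the map $\rho(g)=g\varphi(g^{-1})$ is a homomorphism, $[e]_\varphi=\Ima\rho=\exp(\Ima(I-\du\varphi))$ is a \emph{proper} closed connected subgroup when $1$ is an eigenvalue, and $\mathcal{R}(\varphi)=G/\Ima\rho$ is therefore infinite. (Alternatively, one can note that a connected abelian Lie group is divisible, so it has no proper subgroup of finite index, whence $R(\varphi')<\infty$ forces $\Ima\rho=K$ and $R(\varphi')=1$.) With this supplement your argument goes through; without it, the step from ``$R(\varphi')\neq 1$'' to ``$R(\varphi')=\infty$'' is unjustified.
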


\begin{proof} Let $G$ be as in the hypothesis and $\varphi\in\Aut(G)$.

\smallskip
\noindent{\bf Step 1:} First suppose $G$ is simply connected and nilpotent. Then by Lemmas 3.3 and 3.4 of \cite{DeP}, $R(\varphi)=1$, otherwise 
$R(\varphi)=\infty$ and (1) and (2) above are equivalent. Since $\G$ is a vector space, (2) is equivalent to the statement that 
$|\Fix(\du\varphi)|=\infty$; this is equivalent to the statement that 
$|\Fix(\varphi)|=\infty$ as $\exp\circ\,\du\varphi=\varphi\circ\,\exp$, and $\exp$ is a diffeomorphism. Thus (1-3) are equivalent in 
case $G$ is simply connected and nilpotent. We may note here that $R(\varphi)=1$ if and olny if  
$\Fix(\varphi)=\{e\}$, as any simply connected nilpotent group is torsion-free.

\smallskip
\noindent{\bf Step 2:} Now suppose $G$ is compact. Then $G$ is abelian and isomorphic to an $n$-dimensional torus. 
By \Cref{Torus} (see \cite[Theorem 2.2]{BB2}), either $R(\varphi)=1$ or $|\Fix(\varphi)|=\infty$. Now suppose $R(\varphi)\ne 1$.  
Then (3) holds. Let $V$ (resp.\ $U$) be a neighbourhood of 0 in $\G$ (resp.\ of the identity $e$ in $G$), such that $\exp:V\to U$ 
is a diffeomorphism. Let $W\subset V$ be an open neighbourhood of $0$ in $\G$ such that $\du\varphi(W)\subset V$. Then 
$\exp(W)\subset U$ is open in $G$. Since $|\Fix(\varphi)|=\infty$, $\exp(W)\cap \Fix(\varphi)\ne\{e\}$. Let $x\in \exp(W)\mi \{e\}$ be 
such that $\varphi(x)=x$. Then $\du\varphi(X)=X$, where $X\in W$ with $\exp X=x$. Hence $1$ is an eigenvalue of $\du\varphi$ 
and (2) holds. Thus $(3)\implies (2)$.

Now suppose (2) holds. Take $X\in \G\mi\{0\}$ such that  $\du\varphi(X)=X$, then 
$\varphi(\exp tX)=\exp tX$. Thus $(2)\implies (3)$. Now take $\rho:G\to G$ be such that $\rho(g)=g\varphi(g^{-1})$. 
Observe that $\Ima\rho$ is a compact connected subgroup of $G$. 
As $1$ is an eigenvalue of $\du\varphi$, $S:=\Ima(I-\du\varphi)$ is a proper subspace of $\G$, where 
$I$ is the identity matrix in $\GL(\G)$. As $G$ is abelian, $\G$ is an abelian Lie algebra and 
$S$ is a proper Lie subalgebra. Then we have that 
$$
\exp(S)=\{\exp(Y-\du\varphi(Y))\mid Y\in\G\}=\{(\exp Y)\varphi(\exp Y)^{-1}\mid Y\in\G\}=\Ima\rho.$$ 
Thus $\Ima\rho$ is a proper compact connected subgroup of $G$. Note that $\Ima\rho=[e]_{\varphi}$ as the exponential map 
is surjective. As $G$ is abelian, for the set $\mathcal{R}(\varphi)$ of Reidemeister classes of $\varphi$, we have that 
$\mathcal{R}(\varphi) = \{ [x]_\varphi \mid x \in G \} = \{x[e]_\varphi\mid x \in G\} = G/ \Ima\rho$. 
Now we have that $R(\varphi) = |\mathcal{R}(\varphi)| = |G / \Ima\rho| = \infty$. Thus (1) holds and we have that $(2)\implies (1)$. 
Also, since $(3)\implies (2)$, we get that $R(\varphi)=\infty$ in case $R(\varphi)\ne 1$.

Now suppose (1) holds, i.e.\ $R(\varphi)=\infty$. Then $R(\varphi)\ne 1$, hence by \Cref{Torus} (see \cite[Theorem 2.2]{BB2}), 
(3) holds, and hence $(1-3)$ are equivalent if $G$ is compact.

\smallskip
\noindent{\bf Step 3:} Now suppose $G$ is a connected nilpotent Lie group. Let $K$ be the maximal compact subgroup of $G$. 
Then $K$ is connected, central and characteristic in $G$, and $G/K$ is simply connected and nilpotent. 
Let $\varphi'=\varphi|_K$ and let $\bar\varphi$ be the automorphism of $G/K$ induced by 
$\varphi$. Now if $R(\bar\varphi)=\infty$, then $R(\varphi)=\infty$. If $R(\bar\varphi)<\infty$, we get from above that 
$R(\bar\varphi)=1$ and also $|\Fix(\bar\varphi)|=1$. Now if $R(\varphi')=\infty$, then by \Cref{inv}\,(1), $R(\varphi)=\infty$. 
Now suppose $R(\varphi')<\infty$. Then from Step 2, $R(\varphi')=1$ and by \Cref{inv}\,(3), we get $R(\varphi)=1$.

Now suppose (1) holds. If possible, suppose (2) does not hold, i.e.\ $1$ is not an eigenvalue of $\du\varphi$. Then 
1 is not an eigenvalue of $\du\varphi'$, and by Step 2, $R(\varphi')=1$. Also, $1$ is not an eigenvalue of $\du\bar\varphi$, where $\du\bar\varphi$ 
is the quotient Lie algebra automorphism of $\G/\K$, for the Lie algebra $\K$ of $K$ contained in $\G$. As $G/K$ is simply connected and 
nilpotent, by Step 1, $R(\bar\varphi)=1$. Now by \Cref{inv}\,(3), $R(\varphi)=1$, which leads to a contradiction. Thus (2) holds 
and we have that $(1)\implies (2)$. Now if (2) holds, then there exists a nonzero $X\in\G$ such that $\du\varphi(X)=X$, and we have that 
$\varphi(\exp tX)=\exp tX$, $t\in\R$, hence (3) holds. Thus $(2)\implies (3)$.

Now suppose (3) holds. Note that if $R(\bar\varphi)=\infty$, then $R(\varphi)=\infty$. Now suppose 
$R(\bar\varphi)<\infty$. Then $R(\bar\varphi)=1$ and from Step 1, $|\Fix(\bar\varphi)|=1$. Thus $\Fix(\varphi)\subset K$, and hence
$|\Fix(\varphi')|=\infty$. By Step 2 above, $R(\varphi')=\infty$. Hence by \Cref{inv}\,(1), $R(\varphi)=\infty$, i.e.\ (1) holds. Thus
$(1-3)$ are equivalent when $G$ is nilpotent.

\smallskip
\noindent{\bf Step 4:} Suppose $G$ is any connected solvable Lie group. Then $G_1:=\overline{[G, G]}$ is a closed normal 
characteristic Lie subgroup of $G$, and it is nilpotent (see e.g.\ \cite{MSR}). Now $G / G_1$ is a connected abelian 
Lie group.
 
 Let $\varphi'=\varphi|_{G_1}$ and let $\bar\varphi$ be the automorphism of $G/G_1$ induced by $\varphi$. If $R(\varphi')=1$ and 
 $R(\bar\varphi)=1$. Then $R(\varphi)=1$ by \Cref{inv}\,(3). Now suppose $R(\varphi)\ne 1$.  Then either  $R(\varphi')\ne 1$ or 
 $R(\bar\varphi)\ne 1$ (cf.\ \Cref{inv}\,(3)). As $G_1$ is nilpotent and $G/G_1$ is abelian, we get from Step 3 that $R(\varphi')=\infty$ or 
 $R(\bar\varphi)=\infty$. Suppose $R(\bar\varphi)=\infty$. Then by \Cref{endo}, $R(\varphi)=\infty$. Now suppose $R(\bar\varphi)<\infty$. Then 
 we have that $R(\varphi')=\infty$. Moreover, from Step 3, we have that $R(\bar\varphi)=1$ and $|\Fix(\bar\varphi)|<\infty$. Now by 
 \Cref{inv}\,(1), $R(\varphi)=\infty$. 
 
 Now suppose (1) holds, i.e.\ $R(\varphi)=\infty$. If $R(\bar\varphi)=\infty$, then by Step 3, $1$ is an eigenvalue of $\du\bar\varphi$, and
 hence $1$ is also an eigenvalue of $\du\varphi$, and (2) holds. If $R(\bar\varphi)<\infty$, then $R(\bar\varphi)=1$. Arguing as above, we
 have that $R(\varphi')=\infty$. Hence we get from Step 3 that $1$ is an eigenvalue of $\du\varphi'$, and hence, that of $\du\varphi$. 
 Thus (2) holds and we have that $(1)\implies (2)$. As seen in Step 3, $(2)\implies (3)$ is obvious. Now suppose (3) holds, 
 i.e.\ $|\Fix(\varphi)|=\infty$. Arguing as in Step 3, for $G$ and $G_1$ instead of $G$ and $K$ there, and using result for the nilpotent 
 case, we can show that $R(\varphi)=\infty$, and thus $(3)\implies (1)$, and hence $(1-3)$ are equivalent. 
\end{proof}

\begin{remark} We may note here that for an automorphism $\varphi$ on a connected solvable Lie group $G$, $R(\varphi)=1$ if and only if 
$|\Fix(\varphi)|<\infty$, and the latter statement is equivalent to the statement that $1$ is not an eigenvalue of $\du\varphi$.  
\end{remark}
 
The following corollary is easy to deduce from \Cref{Solv}.

\begin{cor} \label{multi} Let $G$ be a connected solvable Lie group. Let $\varphi\in\Aut(G)$ and let $H$ be a closed connected 
$\varphi$-invariant normal subgroup of $G$. Let $\varphi'=\varphi|_H$ and let $\bar\varphi\in\Aut(G/H)$ be the automorphism induced by $\varphi$.
Then the following hold:
\begin{enumerate}
\item $R(\varphi)=R(\varphi')R(\bar\varphi)$.
\item $R(\varphi)=1$ if and only if $R(\varphi')=R(\bar\varphi)=1$.
\item $R(\varphi)=\infty$ if and only if either $R(\varphi')=\infty$ or $R(\bar\varphi)=\infty$. 
\end{enumerate}
\end{cor}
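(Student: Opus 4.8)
The plan is to reduce everything to the dichotomy furnished by \Cref{Solv}. First I would record the structural facts: since $H$ is a closed connected subgroup of the connected solvable Lie group $G$, it is itself a connected solvable Lie group, and since $H$ is closed and normal, the quotient $G/H$ is a connected solvable Lie group as well. Hence \Cref{Solv} applies to $\varphi$, to $\varphi'=\varphi|_H$, and to $\bar\varphi$, so each of $R(\varphi)$, $R(\varphi')$, $R(\bar\varphi)$ lies in $\{1,\infty\}$. Granting this, the product in (1) is to be read as $1\cdot 1=1$ and $1\cdot\infty=\infty\cdot 1=\infty\cdot\infty=\infty$, so (1) is precisely the conjunction of (2) and (3); it therefore suffices to establish (2) and (3).

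For (2), the backward implication is immediate: if $R(\varphi')=R(\bar\varphi)=1$, then $R(\varphi)=1$ by the second assertion of \Cref{inv}\,(3). For the forward implication, suppose $R(\varphi)=1$. By \Cref{endo} we have $R(\bar\varphi)\le R(\varphi)=1$, so $R(\bar\varphi)=1$; and since $G/H$ is connected solvable, \Cref{Solv} upgrades this to $|\Fix(\bar\varphi)|<\infty$. If $R(\varphi')$ were infinite, then \Cref{inv}\,(1), applied with $|\Fix(\bar\varphi)|<\infty$, would force $R(\varphi)=\infty$, a contradiction; hence $R(\varphi')=1$. This proves (2).

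For (3), I would simply take the logical negation of (2) using the dichotomy from the first step. Since $R(\varphi)\in\{1,\infty\}$, the statement $R(\varphi)=\infty$ is equivalent to $R(\varphi)\ne 1$, which by (2) is equivalent to the failure of the conjunction $R(\varphi')=R(\bar\varphi)=1$; and since $R(\varphi')$ and $R(\bar\varphi)$ themselves lie in $\{1,\infty\}$, that failure is exactly the statement that $R(\varphi')=\infty$ or $R(\bar\varphi)=\infty$. Finally, (1) follows by combining (2) and (3) with the convention on the product recorded above.

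The only genuinely non-formal point is the forward direction of (2), where I must rule out $R(\varphi')=\infty$; the key is that \Cref{Solv} converts $R(\bar\varphi)=1$ into the finiteness of $\Fix(\bar\varphi)$, which is exactly the hypothesis needed to invoke \Cref{inv}\,(1). Everything else is bookkeeping with the $\{1,\infty\}$ dichotomy.
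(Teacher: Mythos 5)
Your proof is correct and follows essentially the route the paper intends: the corollary is stated without proof as an easy consequence of \Cref{Solv}, and your deduction mirrors the argument already carried out in Step 4 of the proof of \Cref{Solv} (the dichotomy from \Cref{Solv} applied to $H$ and $G/H$, \Cref{inv}\,(3) for the backward direction, and \Cref{endo} together with \Cref{inv}\,(1) — using that $R(\bar\varphi)=1$ forces $|\Fix(\bar\varphi)|<\infty$ — for the forward direction). Nothing is missing.
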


Recall that a topological group $G$ is said to be compactly generated if there exists a compact subset of $G$ which generates the group $G$. 
Any connected locally compact group is compactly generated, in particular, any connected Lie group is compactly generated. 
A (not necessarily connected) Lie group $G$ is compactly generated if $G/G^0$ is finitely generated. The following is known for 
finitely generated torsion-free nilpotent groups  \cite[Theorem 2]{W}, and for connected nilpotent Lie groups, it follows from \Cref{Solv}.

\begin{thm}{\label{Cptgenilp}}
 Let $G$ be a compactly generated nilpotent Lie group and $\varphi\in\Aut(G)$. Then the following holds:
$R(\varphi) = \infty\mbox{ if and only if }|\Fix(\varphi)|= \infty$.
\end{thm}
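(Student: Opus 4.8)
The plan is to reduce the compactly generated case to the connected case (already handled by \Cref{Solv}) by passing to the identity component $G^0$ and controlling the finite-type quotient $G/G^0$. Since $G$ is a compactly generated nilpotent Lie group, $G^0$ is a connected nilpotent Lie group and $G/G^0$ is a finitely generated (discrete) nilpotent group. Both $G^0$ and $G/G^0$ are characteristic, so $\varphi$ restricts to $\varphi' := \varphi|_{G^0}$ and induces $\bar\varphi$ on $G/G^0$; these are the two pieces I would analyze. The direction $R(\varphi)=\infty \implies |\Fix(\varphi)|=\infty$ would not even require nilpotency in full generality, but for the converse I would work through both factors.

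First I would treat $G/G^0$. This is a finitely generated torsion-free-up-to-finite nilpotent group; more precisely its torsion subgroup is finite and characteristic, and the quotient is finitely generated torsion-free nilpotent. For such groups the cited result \cite[Theorem 2]{W} gives exactly the equivalence $R(\bar\varphi)=\infty \iff |\Fix(\bar\varphi)|=\infty$. I would therefore invoke that result to handle the discrete quotient, after first quotienting out the finite torsion subgroup using \Cref{inv}\,(4), which preserves finiteness of the Reidemeister number in both directions. For the connected part $G^0$, \Cref{Solv} (Step 3 covers the nilpotent case) gives $R(\varphi')=\infty \iff |\Fix(\varphi')|=\infty$, and moreover the trichotomy that $R(\varphi')$ is either $1$ or $\infty$, with $R(\varphi')=1 \iff |\Fix(\varphi')|<\infty$.

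With both pieces in hand I would assemble the statement using the lemmas of \S 2. For the forward direction, if $R(\varphi)=\infty$ I would argue contrapositively: assuming $|\Fix(\varphi)|<\infty$ forces $|\Fix(\varphi')|<\infty$ and $|\Fix(\bar\varphi)|<\infty$, whence $R(\varphi')<\infty$ (so $R(\varphi')=1$ by \Cref{Solv}) and $R(\bar\varphi)<\infty$ by the discrete case; combining these via \Cref{inv}\,(3) yields $R(\varphi)<\infty$, a contradiction. For the reverse direction, if $|\Fix(\varphi)|=\infty$, then since $\Fix(\bar\varphi)$ is a finitely generated nilpotent group and $\Fix(\varphi')$ is a closed subgroup of $G^0$, an infinite fixed-point set must come from one of the two factors: either $|\Fix(\bar\varphi)|=\infty$, giving $R(\bar\varphi)=\infty$ and hence $R(\varphi)=\infty$ by \Cref{endo}; or $\Fix(\varphi)$ meets $G^0$ in an infinite set, so $|\Fix(\varphi')|=\infty$, giving $R(\varphi')=\infty$, and then \Cref{inv}\,(1) (using $|\Fix(\bar\varphi)|<\infty$) yields $R(\varphi)=\infty$.

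The main obstacle I anticipate is the bookkeeping that ensures an infinite fixed-point set of $\varphi$ genuinely forces infinitude in one of the two factors, rather than being an artifact of the extension. The subtle point is that $\Fix(\varphi)$ sits in a short exact sequence relating it to $\Fix(\varphi')$ and a subgroup of $\Fix(\bar\varphi)$, and I must verify that finiteness of both $\Fix(\varphi')$ and $\Fix(\bar\varphi)$ forces finiteness of $\Fix(\varphi)$ — this is where the central/characteristic structure and the torsion-free quotient are used, and it is the step where the nilpotency hypothesis is genuinely needed to invoke \cite[Theorem 2]{W} with its fixed-point characterization.
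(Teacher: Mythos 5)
Your decomposition is the same as the paper's: split along the characteristic subgroup $G^0$, handle the discrete quotient $G/G^0$ via its finite maximal torsion subgroup, \Cref{inv}\,(4) and \cite[Theorem 2]{W}, and handle $G^0$ via \Cref{Solv}. Your reverse direction ($|\Fix(\varphi)|=\infty\Rightarrow R(\varphi)=\infty$) is essentially the paper's argument: if $|\Fix(\bar\varphi)|<\infty$ then the infinite fixed-point set must concentrate in $G^0$, giving $R(\varphi')=\infty$ and then $R(\varphi)=\infty$ by \Cref{inv}\,(1). That part is fine.

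The gap is in the contrapositive of the forward direction, at the step ``combining these via \Cref{inv}\,(3) yields $R(\varphi)<\infty$.'' \Cref{inv}\,(3) has hypothesis $R(\bar\varphi)=1$, and under that hypothesis gives $R(\varphi)\le R(\varphi')$. What you actually have is $R(\varphi')=1$ and $R(\bar\varphi)<\infty$, and $R(\bar\varphi)$ need not be $1$ for a finitely generated nilpotent quotient, so the lemma does not apply. Nor is there any lemma in \S 2 that converts $R(\varphi')<\infty$ and $R(\bar\varphi)<\infty$ into $R(\varphi)<\infty$ for a general normal subgroup: \Cref{inv} only does this when $H$ is finite (part (4)) or central (part (5)), and $G^0$ is neither in general. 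The difficulty is genuine: finitely many $\varphi$-classes of coset representatives $x_i$ combined with $R(\varphi')=1$ only tells you that each $x_ih$, $h\in G^0$, is $\bigl(i_{x_i}\circ\varphi\bigr)$-related to $x_i$ up to a twist of $\varphi'$ by $\Ad$ of a non-identity component element, and one class on the quotient can a priori split into infinitely many classes upstairs. The paper closes exactly this gap with an extra argument you are missing: it sets $G_0=G^0$, $G_m=\ol{[G,G_{m-1}]}$, uses nilpotency to get $G_n=\{e\}$ with each $G_{m-1}/G_m$ central in $G/G_m$, and then applies \Cref{inv}\,(5) successively down this series (together with \Cref{multi} to get $R=1$ on each layer) to conclude $R(\varphi)<\infty$. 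You correctly flag the fixed-point bookkeeping as the subtle point, but that part is actually unproblematic; the real obstruction is the Reidemeister-number assembly across a non-central extension, and your proposal as written does not address it.
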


\begin{proof} The assertion follows from \Cref{Solv} if $G$ is connected. Now we may assume that $G$ is not connected.

\smallskip
\noindent{\bf Step 1:} First suppose $G$ is a finitely generated (discrete) nilpotent group. It is known that $G$ has a unique maximal 
finite subgroup (say) $H$ (see \cite[Theorem 2]{Lo} or \cite[Lemma 3.1]{D2}). Hence $H$ is characteristic in $G$ and $G/H$ is torsion-free. 
By \Cref{inv}\,(4), $R(\varphi)=\infty$ if and only if $R(\bar\varphi)=\infty$. The latter statement is equivalent to the statement that 
$|\Fix(\bar\varphi)|=\infty$ \cite[Theorem 2]{W}, and hence it is equivalent to the statement that $|\Fix(\varphi)|=\infty$, as $H$ is finite.

\smallskip
\noindent{\bf Step 2:} Now suppose $G$ is not discrete. Let $G^0$ be the connected component of the identity $e$ in $G$. 
Then $G^0$ is a (connected) nilpotent Lie group and it is characteristic in $G$. In particular it is $\varphi$-invariant. Let $\varphi'=\varphi|_{G^0}$ 
and let $\bar\varphi$ be the automorphism of $G/G^0$ induced by $\varphi$.

First suppose $|\Fix(\varphi)|=\infty$. If $R(\bar\varphi)=\infty$, then $R(\varphi)=\infty$. 
Now suppose $R(\bar\varphi)<\infty$. Since $G/G^0$ is a (discrete) finitely generated nilpotent group, we get from Step 1, $|\Fix(\bar\varphi)|<\infty$. 
Then $|\Fix(\varphi')|=\infty$. As $G^0$ is nilpotent, by \Cref{Solv}, we get that $R(\varphi')=\infty$. Now by \Cref{inv}\,(1), 
we have that $R(\varphi)=\infty$.

\smallskip
\noindent{\bf Step 3:} Now suppose $|\Fix(\varphi)|<\infty$. We show that $R(\varphi)<\infty$. Then 
$|\Fix(\varphi')|<\infty$, and  by \Cref{Solv}$, R(\varphi')=1$. Now by \Cref{inv}\,(2), $|\Fix(\bar\varphi)|<\infty$. Then from Step 1, we get that 
$R(\bar\varphi)<\infty$. If $G^0$ is central in $G$, then by \Cref{inv}\,(5), $R(\varphi)<\infty$, and the proof is complete in this case.

Let $G_0=G^0$ and $G_m=\ol{[G,G_{m-1}]}$, $m\in\N$. Since $G$ is nilpotent, 
$G_n=\{e\}$ for some $n$ in $\N$. Also, each $G_m$ is connected and characteristic in $G$, $G_m\subset G_{m-1}\subset G^0$,  and 
$G_{m-1}/G_m$ is central in $G/G_m$, $m\in\N$. Let $\varphi'_m:=\varphi|_{G_m}$ and $\bar\varphi'_m\in\Aut(G_m/G_{m+1})$ be 
the automorphism induced by $\varphi'_m$, $m\in\N\cup\{0\}$. Let $\bar\varphi_m\in\Aut(G/G_m)$ be the automorphism of $G/G_m$ 
induced by $\varphi$, $m\in\N$. By \Cref{multi}, we get that $R(\varphi'_m)=1$ and $R(\bar\varphi'_m)=1$ for each $m\in\N\cup\{0\}$.  
As $R(\bar\varphi)<\infty$, where $\bar\varphi\in\Aut(G/G^0)$ and $G^0/G_1$ is central in $G/G_1$, by \Cref{inv}\,(5), 
$R(\bar\varphi_1)<\infty$. Then as $G_{m-1}/G_m$ is central in $G/G_m$, applying \Cref{inv}\,(5) successively 
to $G/G_m$ for $m\ge 2$, we get that $R(\bar\varphi_m)<\infty$, $m\ge 2$. As $G_n=\{e\}$, we get that $\bar\varphi_n=\varphi$, 
and hence $R(\varphi)<\infty$. 
\end{proof}

The following is an easy consequence of \Cref{Cptgenilp} as any closed subgroup of a compactly generated nilpotent group 
is also compactly generated.

\begin{cor} Let $G$ be a compactly generated nilpotent Lie group. Let $\varphi\in\Aut(G)$ and let $H$ be a closed normal 
$\varphi$-invariant subgroup of $G$. Let $\varphi'=\varphi|_H$ and let $\bar\varphi$ be the automorphism of $G/H$ induced 
by $\varphi$. Then $R(\varphi)<\infty$ if and only if $R(\varphi')<\infty$ and $R(\bar\varphi)<\infty$.
\end{cor}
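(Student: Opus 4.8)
The plan is to convert this statement, which is about Reidemeister numbers, entirely into a statement about fixed-point subgroups by invoking \Cref{Cptgenilp}, and then to settle that by an elementary counting argument. The first thing to record is that \Cref{Cptgenilp} applies to all three automorphisms involved. Indeed, $H$ is a closed subgroup of the compactly generated nilpotent group $G$, hence itself a compactly generated nilpotent Lie group, so \Cref{Cptgenilp} applies to $\varphi'$; and since $H$ is closed and normal, $G/H$ is a Lie group which is again nilpotent and compactly generated (being a quotient of such), so \Cref{Cptgenilp} applies to $\bar\varphi$. Thus $R(\varphi)<\infty \iff |\Fix(\varphi)|<\infty$, and likewise $R(\varphi')<\infty \iff |\Fix(\varphi')|<\infty$ and $R(\bar\varphi)<\infty \iff |\Fix(\bar\varphi)|<\infty$. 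It therefore suffices to prove that $|\Fix(\varphi)|<\infty$ if and only if $|\Fix(\varphi')|<\infty$ and $|\Fix(\bar\varphi)|<\infty$.

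For this I would set up two elementary facts. Writing $\pi:G\to G/H$ for the natural projection, one has $\Fix(\varphi')=\Fix(\varphi)\cap H$ (as $\varphi'=\varphi|_H$) and $\pi(\Fix(\varphi))\subseteq\Fix(\bar\varphi)$ (since $\varphi(x)=x$ forces $\bar\varphi(\pi(x))=\pi(x)$). The forward implication is then immediate: if $|\Fix(\varphi)|<\infty$, then its subgroup $\Fix(\varphi')=\Fix(\varphi)\cap H$ is finite, giving $R(\varphi')<\infty$; and $R(\bar\varphi)<\infty$ follows at once from \Cref{endo}, which gives $R(\varphi)\ge R(\bar\varphi)$.

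The reverse implication is the heart of the matter. Assuming $R(\varphi')<\infty$ and $R(\bar\varphi)<\infty$, the sets $\Fix(\varphi')$ and $\Fix(\bar\varphi)$ are both finite. The subgroup $\Fix(\varphi)$ of $G$ contains $\Fix(\varphi)\cap H=\Fix(\varphi')$ as a (normal) subgroup, and the first isomorphism theorem identifies the quotient $\Fix(\varphi)/\Fix(\varphi')$ with $\pi(\Fix(\varphi))$, a subset of the finite group $\Fix(\bar\varphi)$. Hence $|\Fix(\varphi)|=|\Fix(\varphi')|\cdot[\Fix(\varphi):\Fix(\varphi')]\le|\Fix(\varphi')|\cdot|\Fix(\bar\varphi)|<\infty$, and \Cref{Cptgenilp} then yields $R(\varphi)<\infty$.

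I expect no genuine obstacle here, consistent with the claim that this is an easy consequence of \Cref{Cptgenilp}. The only points that need care are the verification that both $H$ and $G/H$ fall under the hypotheses of \Cref{Cptgenilp} (closedness and normality of $H$, and compact generation of the subgroup and of the quotient), together with the two fixed-point relations $\Fix(\varphi')=\Fix(\varphi)\cap H$ and $\pi(\Fix(\varphi))\subseteq\Fix(\bar\varphi)$, after which the result reduces to a short index computation.
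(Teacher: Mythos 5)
Your proof is correct and follows exactly the route the paper intends: the paper gives no written proof, only the remark that the corollary is an easy consequence of \Cref{Cptgenilp} together with the fact that closed subgroups (and quotients) of compactly generated nilpotent Lie groups are again compactly generated nilpotent Lie groups, and your reduction to the fixed-point statement plus the index computation $|\Fix(\varphi)|\le|\Fix(\varphi')|\cdot|\Fix(\bar\varphi)|$ is precisely the intended filling-in of those details.
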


Now we generalise a part of \Cref{Cptgenilp} and a part of \Cref{Solv} to compactly generated solvable Lie groups. 
The following is known for polycyclic groups (see e.g.\ \cite[Theorem 2]{W}).

\begin{cor} \label{Cptgensolv}
 Let $G$ be a compactly generated solvable Lie group and $\varphi \in  \Aut(G)$. 
 Let $\Fix(\varphi)$ denote the set of fixed points of $\varphi$. Then the following hold: If $|\Fix(\varphi)|=\infty$, then $R(\varphi)=\infty$. 
\end{cor}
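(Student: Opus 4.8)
The plan is to mimic Step 2 of the proof of \Cref{Cptgenilp}, splitting $G$ along its identity component. Let $G^0$ be the connected component of $e$; it is a closed characteristic (hence normal and $\varphi$-invariant) subgroup, it is connected solvable, and since $G$ is compactly generated the discrete quotient $G/G^0$ is finitely generated and solvable. Write $\varphi'=\varphi|_{G^0}$ and let $\bar\varphi\in\Aut(G/G^0)$ be the induced automorphism, with $\pi:G\to G/G^0$ the natural projection. I would assume $|\Fix(\varphi)|=\infty$ and aim to produce $R(\varphi)=\infty$.

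First I would dispose of the easy case $R(\bar\varphi)=\infty$: here \Cref{endo} gives $R(\varphi)\ge R(\bar\varphi)=\infty$ at once. So the substantive case is $R(\bar\varphi)<\infty$. In that case I would appeal to the known result for the discrete finitely generated solvable group $G/G^0$ (the polycyclic case being \cite[Theorem 2]{W}) to conclude $|\Fix(\bar\varphi)|<\infty$. Since $\varphi(x)=x$ forces $\bar\varphi(\pi(x))=\pi(x)$, the projection sends $\Fix(\varphi)$ into the finite set $\Fix(\bar\varphi)$; hence $\Fix(\varphi')=\Fix(\varphi)\cap G^0$ has finite index in $\Fix(\varphi)$ and is therefore infinite. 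As $G^0$ is connected solvable, \Cref{Solv} upgrades $|\Fix(\varphi')|=\infty$ to $R(\varphi')=\infty$. Finally, with $R(\varphi')=\infty$ and $|\Fix(\bar\varphi)|<\infty$ in hand, \Cref{inv}\,(1) delivers $R(\varphi)=\infty$, completing the argument.

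The one input that is not purely formal is the implication $R(\bar\varphi)<\infty\Rightarrow|\Fix(\bar\varphi)|<\infty$ for the discrete component group $G/G^0$, and this is the step I expect to be the main obstacle. For connected $G$ there is no quotient to control and \Cref{Solv} finishes directly; the whole difficulty of the compactly generated case is packaged into comparing $\Fix$ with $R$ for the finitely generated solvable discrete group $G/G^0$. When $G/G^0$ is polycyclic this comparison is exactly \cite[Theorem 2]{W}; the care needed is to ensure the cited result genuinely covers the discrete quotient that arises, since a finitely generated solvable group need not in general be polycyclic. Once that implication is granted, the remaining bookkeeping---the finite-index estimate on fixed points under $\pi$ and the two invocations of \Cref{Solv} and \Cref{inv}\,(1)---is routine.
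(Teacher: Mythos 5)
Your reduction has a genuine gap, and it is exactly the one you flag: the implication $R(\bar\varphi)<\infty\Rightarrow|\Fix(\bar\varphi)|<\infty$ for the discrete quotient $G/G^0$ is not available. That quotient is only guaranteed to be a finitely generated solvable group, which need not be polycyclic, so \cite[Theorem 2]{W} does not cover it; and nothing in the paper (in particular not \Cref{Cptgenilp}, which is about nilpotent groups) supplies it either. Worse, the needed implication is essentially the corollary itself for finitely generated solvable discrete groups: since any such group is a compactly generated solvable Lie group with trivial identity component, your splitting along $G^0$ is vacuous in that case and the argument reduces the problem to itself. So the decomposition $G=G^0\cdot(G/G^0)$ is the wrong axis along which to cut; it isolates precisely the part of the problem that the connected theory (\Cref{Solv}) cannot reach. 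The bookkeeping around it --- $\pi(\Fix(\varphi))\subset\Fix(\bar\varphi)$ finite forcing $|\Fix(\varphi')|=\infty$, then \Cref{Solv} and \Cref{inv}\,(1) --- is fine, but it hangs on the unproven step.

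The paper cuts along the derived series instead: it inducts on the derived length, taking $G_1=\overline{[G,G]}$ as the invariant subgroup, so that the quotient $G/G_1$ at each stage is a compactly generated \emph{abelian} Lie group. For such a quotient, \Cref{Cptgenilp} does give the equivalence $R(\bar\varphi)<\infty\Leftrightarrow|\Fix(\bar\varphi)|<\infty$, and then the same three moves you propose (infinite fixed-point set passes down to $G_1$, the induction hypothesis yields $R(\varphi')=\infty$, and \Cref{inv}\,(1) concludes) close the argument, with the base case $G$ abelian again handled by \Cref{Cptgenilp}. If you want to salvage your approach, you would have to prove the fixed-point criterion for finitely generated solvable discrete groups directly, which is a harder (and, beyond the polycyclic case, separate) problem.
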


\begin{proof} Let $G$ and $\varphi$ be as in the hypothesis and suppose $|\Fix(\varphi)|=\infty$. We prove that $R(\varphi)=\infty$ 
by induction on the length $n(G)$ of the derived series in $G$, i.e.\ for $G_1:=\ol{[G,G]}$ and $G_{m+1}:=\ol{[G_m,G_m]}$, $m\in\N$, 
there exists $n=n(G)$ such that $G_n=\{e\}$, as $G$ is solvable. First suppose $G$ is abelian, i.e.\ $n(G)=1$. Then $R(\varphi)=\infty$ 
by \Cref{Cptgenilp}. Now suppose the assertion holds for all $G$ for which $n(G)\leq k$ for some $k\in\N$. Now suppose $G$ is such that 
$n(G)=k+1$. Then $G_1$ is a compactly generated solvable Lie group and it is a characteristic subgroup of $G$. Moreover, $n(G_1)=k$ 
and $G/G_1$ is a compactly generated abelian Lie group. Let $\varphi':=\varphi|_{G_1}$ and $\bar\varphi$ be the automorphism 
of $G/G_1$ induced by $\varphi$. If $R(\bar\varphi)=\infty$, then $R(\varphi)=\infty$. Now suppose $R(\bar\varphi)<\infty$. Then by \Cref{Cptgenilp}, 
we have that $|\Fix(\bar\varphi)|<\infty$ and hence $\Fix(\bar\varphi)$ is a finite subgroup of $G/G_1$. Since $|\Fix(\varphi)|=\infty$, we get that 
$|\Fix(\varphi')|=\infty$. As $n(G_1)=k$, by the induction hypothesis, we have that $R(\varphi')=\infty$. Now by \Cref{inv}\,(1), we get that $R(\varphi)=\infty$.
\end{proof}

The following example shows that the converse of \Cref{Cptgensolv} does not hold.

\begin{example} \label{counter} Let $G=\Z_2\ltimes\R$, where the action of $\Z_2=\{0,1\}$ on $\R$ is defined uniquely by 
$(1,0)(0,r)(1,0)=(0, -r)$ for $r\in\R$. Here, $G$ is a compactly generated solvable Lie group. Consider $i_x$, the inner 
automorphism by $x=(1,0)\in\Z_2$. It is easy to see that $\Fix(i_x)=\{(0,0), (1,0)\}=\Z_2$ and $|\Fix(i_x)|=2$. But $R(i_x)=\infty$ 
by \Cref{inner}. One can also take a torsion-free compactly generated solvable Lie group $H=\Z\ltimes\R$, where the  
action of \,$\Z$ on $\R$ factors through the action of $\Z_2$ on $\R$. It is easy to see that for $\phi=-\Id$ on $H$, $\Fix(\phi)=\{(0,0)\}$ 
while $\Fix(i_y\circ\phi)=\R$ for $y=(1,0)\in\Z$, and $R(\phi)=R(i_y\circ\phi)=\infty$ by \Cref{Cptgensolv}.

Note that $G$ does not have (topological) $R_\infty$-property as for $\psi\in\Aut(G)$ defined as $\psi(z,r)=(z,\alpha\,r)$, 
$z\in\Z_2$, $r\in\R$ where $\alpha\in\R^*$ such that $\alpha\ne \pm\,1$, $R(\psi)=2$. Moreover, $H$ as above also does not have 
this property as for $\varphi\in\Aut(H)$, $\varphi(z,r)=(-z,\alpha r)$, $z\in\Z$, $r\in\R$, where $\alpha$ is as above, $R(\varphi)=2$.
\end{example}

There are two different kinds of Lie groups of the form $\Z\ltimes\R$, one of them, where the $\Z$-action on $\R$ factors through 
the action of $\Z_2$ on $\R$ is discussed in \Cref{counter} above. Now we discuss the other kind.

\begin{example} 
Consider a Lie group $L:=\Z\ltimes\R$, where the conjugation action of $\Z$ on $\R$ is given by 
$(z,0)(0,r)(-z, 0)=(0,\alpha^z r)$, $z\in\Z$, $r\in\R$, for some $\alpha\in\R^*\mi\{\pm\,1\}$. (All these groups are isomorphic for different $\alpha$, 
so we can fix $\alpha$.) Note that $L^0=\R$ and it is characteristic in $L$. Let $\varphi\in\Aut(L)$. Let $\varphi':=\varphi|_\R$ and let $\bar\varphi$ 
be the automorphism induced by $\varphi$ on $L/L^0$, which is isomorphic to $\Z$. Then $\varphi'=\beta\,\Id$ for some $\beta\in\R^*$, and 
$\bar\varphi=\pm\,\Id$. Now we show that $\bar\varphi=\Id$. If possible, suppose $\varphi(z)=-zx_z$ for some $x_z\in\R$ which depends 
on $z$. As $\R$ is abelian and $\varphi$ is a homomorphism, for $z\in\Z$ and $r\in\R$, $\varphi(zrz^{-1})=\alpha^z\beta r=\alpha^{-z}\beta r$. 
Hence, $\alpha^{2z}=1$ which leads to a contradiction as $\alpha\ne\pm\,1$ and $z$ can be chosen to be nonzero. Thus $\bar\varphi=\Id$ 
and $R(\bar\varphi)=\infty$. Now by \Cref{endo}, $R(\varphi)=\infty$. Since this holds for all $\varphi\in\Aut(L)$, we get that $L$ has 
topological $R_\infty$-property.
\end{example}
 
\section{Twisted conjugacy classes in connected Lie groups}
 
In this section we prove that any nontrivial connected Lie group $G$ has infinitely many conjugacy classes, and if $G$ is not nilpotent, then 
$\Aut(G)$ admits a subgroup of finite index in which every automorphism has infinite Reidemeister number.

For a connected Lie group $G$, if $\varphi=i_g$ is the inner automorphism by $g\in G$, then $R(\varphi)=R(\Id)$ is the 
same as the number of conjugacy classes in $G$. If $G$ is solvable, then $G/\ol{[G,G]}$ is nontrivial, abelian, connected and infinite, 
it follows that $R(i_g)=R(\Id)=\infty$. We will now show that this also holds for any connected Lie group $G$. 
 
\begin{thm}{\label{ConnLie}}
Let $G$ be a nontrivial connected Lie group. Then the number of conjugacy classes in $G$ is infinite. Further, if $G$ is not nilpotent, 
 then there exists a subgroup $L$ of finite index in $\Aut(G)$, such that $R(\alpha)=\infty$ for every $\alpha\in L$; in particular, there exists 
 $n \in \N$ such that $R(\varphi^n) = \infty$ for every $\varphi\in\Aut(G)$.
 \end{thm}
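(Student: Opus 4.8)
The plan is to treat the three assertions in turn, reducing the twisted statements to \Cref{Solv}, to the count of ordinary conjugacy classes, and to the structure theory via the radical and nilradical.

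First I would prove that the number of conjugacy classes is infinite; since this number equals $R(\Id)$ and $R(i_g)=R(\Id)$ by \Cref{inner}, it suffices to show $R(\Id)=\infty$. If $G$ is nilpotent, then $G/\overline{[G,G]}$ is a nontrivial connected abelian Lie group, hence infinite, and \Cref{endo} applied to $\Id$ and the characteristic subgroup $\overline{[G,G]}$ gives $R(\Id)=\infty$. If $G$ is not nilpotent, I would use the characteristic polynomial $\chi_x(\lambda)=\det(\lambda I-\Ad x)$ of the adjoint action. Since $\Ad(hxh^{-1})=\Ad(h)(\Ad x)\Ad(h)^{-1}$, the polynomial $\chi_x$ depends only on the conjugacy class of $x$, and $x\mapsto\chi_x$ is continuous. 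If it were constant it would equal $\chi_e(\lambda)=(\lambda-1)^{\dim G}$, so every $\Ad x$ would be unipotent; then $\Ad(G)$ is a connected group of unipotent operators, $\mathrm{ad}(\G)$ consists of nilpotent operators (Kolchin), and $\G$ is nilpotent by Engel's theorem, a contradiction. Hence $x\mapsto\chi_x$ is non-constant on the connected group $G$, its image is a connected set with more than one point and so is infinite, and distinct values give distinct conjugacy classes.

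For the second assertion I would split according to solvability. Suppose first $G$ is solvable and non-nilpotent, and let $\mathfrak n$ be the (characteristic) Lie algebra of the nilradical; then $\mathfrak a:=\G/\mathfrak n$ is a nonzero abelian Lie algebra on which each $\du\alpha$, $\alpha\in\Aut(G)$, induces $\overline{\du\alpha}\in\GL(\mathfrak a)$. The key point is that the image of $\Aut(G)\to\GL(\mathfrak a)$ is finite: $\mathfrak a$ acts on $V:=\mathfrak n/[\mathfrak n,\mathfrak n]$, and I would show its weights span $\mathfrak a^{*}$ (otherwise some lift $H_0\notin\mathfrak n$ acts nilpotently on $\mathfrak n$, making $\R H_0+\mathfrak n$ a nilpotent ideal strictly larger than $\mathfrak n$, contradicting maximality of the nilradical); since $\overline{\du\alpha}$ permutes this finite spanning set of weights, the image embeds into a finite symmetric group. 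Taking $L$ to be the finite-index kernel, every $\alpha\in L$ satisfies $\overline{\du\alpha}=\Id$ on $\mathfrak a\neq 0$, so $1$ is an eigenvalue of $\du\alpha$, whence $R(\alpha)=\infty$ by \Cref{Solv}.

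If $G$ is not solvable, its radical $R$ is characteristic, so each $\alpha$ induces $\bar\alpha$ on the nontrivial semisimple group $\bar G:=G/R$, yielding a homomorphism $\Aut(G)\to\Aut(\bar G)$. Since the outer automorphism group of a semisimple Lie algebra is finite, $\Inn(\bar G)$ has finite index in $\Aut(\bar G)$; let $L$ be its preimage, which then has finite index in $\Aut(G)$. For $\alpha\in L$ the induced $\bar\alpha$ equals some $i_{\bar g}$, so $R(\bar\alpha)=R(\Id)=\infty$ by \Cref{inner} together with the first assertion applied to $\bar G$, and hence $R(\alpha)=\infty$ by \Cref{endo}. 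The last assertion then follows formally: given a finite-index $L$, its normal core $L'$ is normal of finite index $n$, and for every $\varphi$ the image of $\varphi$ in the finite group $\Aut(G)/L'$ has order dividing $n$, so $\varphi^{n}\in L'\subseteq L$ and $R(\varphi^{n})=\infty$. I expect the solvable case to be the main obstacle, specifically the claim that $\Aut(G)\to\GL(\G/\mathfrak n)$ has finite image; its crux is that the weights of $\G/\mathfrak n$ on $\mathfrak n/[\mathfrak n,\mathfrak n]$ span $(\G/\mathfrak n)^{*}$, and verifying this together with the maximality step above is where the real work lies, everything else being structure theory or a direct appeal to \Cref{Solv}, \Cref{endo} and \Cref{inner}.
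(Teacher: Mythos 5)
Your proposal is correct, but it reaches the theorem by a genuinely different route at both of its nontrivial points. For the count of conjugacy classes, the paper reduces to the semisimple quotient $G/R$ and splits into two cases: for compact $G/R$ it produces infinitely many classes from elements of distinct prime orders in a maximal torus, and for non-compact $G/R$ it picks $x$ with an eigenvalue of $\Ad x$ off the unit circle and separates the classes of the powers $x^n$ by their adjoint spectra. Your observation that $x\mapsto\det(\lambda I-\Ad x)$ is a continuous conjugation invariant which is non-constant exactly when $G$ is non-nilpotent (via Kolchin and Engel) replaces this case analysis by a single connectedness argument; it is cleaner and works directly for every non-nilpotent $G$ without passing to $G/R$. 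For the second assertion, the paper's \Cref{Autgn} obtains the finite-index subgroup from Dani's theorem that $\Aut(G/K)$ is almost algebraic (hence has finitely many components) and then shows that $\Aut(G)^0$ acts trivially on $R/N$ via a Levi decomposition of $\Aut(G)^0\ltimes G$ and by inner automorphisms on $G/R$; you instead prove finiteness of the image of $\Aut(G)$ in $\GL(\G/\mathfrak{n})$ directly, by showing that the weights of $\G/\mathfrak{n}$ on $\mathfrak{n}/[\mathfrak{n},\mathfrak{n}]$ span the dual (using maximality of the nilradical) and are permuted by every automorphism, and in the non-solvable case you use only the finiteness of the outer automorphism group of the semisimple quotient. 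Your route is more elementary and self-contained, avoiding the almost-algebraicity input entirely, while the paper's lemma is stronger as a standalone statement (a single normal $L$ on which $1$ is an eigenvalue of $\du\alpha$ even in the semisimple case), which is reused later via \Cref{autg}. Two small points to tighten when writing your version up: the weights are complex-valued, so ``span'' should be read in $(\G/\mathfrak{n})^{*}\otimes\C$ (the set of weights is stable under complex conjugation, so a proper span still yields a nonzero real $H_0$ annihilated by all of them); and to promote nilpotence of $\mathrm{ad}\,H_0$ on $\mathfrak{n}/[\mathfrak{n},\mathfrak{n}]$ to nilpotence on all of $\mathfrak{n}$, note that every weight of $\G$ on the lower central series quotients of $\mathfrak{n}$ is a sum of weights on $\mathfrak{n}/[\mathfrak{n},\mathfrak{n}]$.
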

 
 We first prove the following useful result which generalises Lemma 3.2 of \cite{CS}. 
  
 \begin{lem} \label{Autgn} Let $G$ be a connected Lie group which is not nilpotent. Then $\Aut(G)$ has a normal subgroup $L$ of finite 
 index such that $1$ is an eigenvalue of $\du\alpha$ for every $\alpha\in L$. In particular, for $n=|\Aut(G)/L|$, $1$ is an eigenvalue of 
 \,$\du\varphi^n$ for every $\varphi\in\Aut(G)$.
 \end{lem}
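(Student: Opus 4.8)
The plan is to produce the finite-index subgroup $L$ as the preimage under $\du$ of the identity component of the Lie algebra automorphism group, and then to prove the purely Lie-algebraic statement that \emph{every} element of that identity component has $1$ as an eigenvalue. Recall from the Notations that $\du\colon\Aut(G)\to\GL(\G)$ is an injective homomorphism whose image is closed and lies inside $\Aut(\G)$, the group of automorphisms of the Lie algebra $\G$. Since $\Aut(\G)$ is the stabiliser of the bracket tensor in $\GL(\G)$, it is a real algebraic group and hence has only finitely many connected components. Let $\Aut(\G)^0$ denote its identity component and set $L:=\du^{-1}(\Aut(\G)^0)$. As $\Aut(\G)^0$ is normal in $\Aut(\G)$ and $\du$ is a homomorphism, $L$ is a normal subgroup of $\Aut(G)$; and since $\du$ is injective, $\Aut(G)/L$ embeds into $\Aut(\G)/\Aut(\G)^0$, which is finite, so $L$ has finite index. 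It therefore remains to show that $1$ is an eigenvalue of $\psi$ for every $\psi\in\Aut(\G)^0$, since then in particular $1$ is an eigenvalue of $\du\alpha$ for every $\alpha\in L$.

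For this I would first invoke Jacobson's theorem: a finite-dimensional Lie algebra over $\R$ admitting a nonsingular derivation is nilpotent. As $G$, and hence $\G$, is not nilpotent, the contrapositive shows that every derivation $D\in\mathrm{Der}(\G)$ is singular, i.e.\ $0$ is an eigenvalue of $D$. Consequently, for each $D\in\mathrm{Der}(\G)$ the eigenvalues of $\exp(D)$ are the numbers $e^{\lambda}$ with $\lambda$ ranging over the eigenvalues of $D$, so $1=e^{0}$ is an eigenvalue of $\exp(D)$; equivalently $\det(\exp(D)-\Id)=0$. Since $\mathrm{Der}(\G)$ is the Lie algebra of $\Aut(\G)$, the set $\exp(\mathrm{Der}(\G))$ contains an open neighbourhood of $\Id$ in $\Aut(\G)^0$, so the function $\psi\mapsto\det(\psi-\Id)$ vanishes on an open subset of $\Aut(\G)^0$. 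This function is the restriction to $\Aut(\G)^0$ of a polynomial in the matrix entries of $\psi$, hence is real-analytic on the connected real-analytic manifold $\Aut(\G)^0$; by the identity theorem it vanishes identically. Thus $\det(\psi-\Id)=0$, i.e.\ $1$ is an eigenvalue of $\psi$, for every $\psi\in\Aut(\G)^0$, which is the main point.

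Finally, the ``in particular'' follows by a counting argument: with $n:=|\Aut(G)/L|$, Lagrange's theorem applied to the image of $\varphi$ in the finite quotient $\Aut(G)/L$ gives $\varphi^{n}\in L$ for every $\varphi\in\Aut(G)$, whence $\du\varphi^{n}=(\du\varphi)^{n}\in\Aut(\G)^0$ has $1$ as an eigenvalue.

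I expect the crux to be the second paragraph: the statement is genuinely pointwise — in general the elements of $\Aut(\G)^0$ share no common fixed vector (as one already sees for $\G=\R\ltimes\R^{2}$ with $\R$ acting by rotation), so eigenvalue $1$ cannot be extracted from a single characteristic subspace. The combination of Jacobson's theorem (to control $\mathrm{Der}(\G)$ near the identity) with analytic propagation across the connected component is what turns the infinitesimal, one-parameter information into the required statement for all of $\Aut(\G)^0$; verifying that $\det(\,\cdot\,-\Id)$ vanishes along one-parameter subgroups and that real-analytic continuation applies is the part demanding care.
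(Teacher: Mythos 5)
Your proof is correct, but it takes a genuinely different route from the paper's. The paper first passes to $G/K$ ($K$ the maximal compact connected central subgroup) in order to invoke Dani's theorem that $\Aut(G/K)$ is almost algebraic, takes $L$ to be the preimage of $\Aut(G/K)^0$, and then runs a case analysis through the Levi decomposition: for solvable $G$ it shows $\Aut(G)^0$ acts trivially on the nontrivial quotient $G/N$, and for semisimple $G$ it reduces to inner automorphisms $i_x$ and produces a fixed vector for $\Ad x$ via maximal tori or almost algebraic abelian hulls of $x$. You instead stay entirely at the Lie algebra level: $L:=\du^{-1}(\Aut(\G)^0)$ is normal of finite index because $\Aut(\G)$ is a real algebraic group, and the pointwise statement on $\Aut(\G)^0$ follows from Jacobson's theorem (a non-nilpotent $\G$ has no nonsingular derivation, so $\det(\exp D-\Id)=0$ for every $D\in\mathrm{Der}(\G)$) combined with real-analytic continuation of $\psi\mapsto\det(\psi-\Id)$ from a neighbourhood of the identity to the whole connected group. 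All the steps check out: the spectral mapping for $\exp$, the local surjectivity of $\exp$ onto $\Aut(\G)^0$, and the identity theorem on the connected analytic manifold $\Aut(\G)^0$ are each standard, and the final Lagrange argument for $\varphi^n\in L$ is the same as the paper's. Your version is shorter and avoids both the structure theory and the external almost-algebraicity input; what it does not deliver is the extra structural information the paper extracts along the way and records in \Cref{autg} (that a finite-index subgroup of $\Aut(G)$ acts trivially on $R/N$ and by inner automorphisms on $G/R$), which is reused in the proofs of \Cref{ConnLie} and \Cref{Oddsolv}. So your argument proves the lemma as stated, but could not silently replace the paper's proof without supplying that remark separately.
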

 
 \begin{proof} Let $K$ be the maximal compact connected central subgroup of $G$. Then $K$ is characteristic in $G$ and 
 $G/K$ has no nontrivial compact connected central subgroups. Hence $\Aut(G/K)$ is almost algebraic, i.e.\ it is a closed subgroup of finite 
 index in an algebraic group  (see \cite{D1}, see also \cite{DS}). In particular, $\Aut(G/K)$ has finitely many connected components. Let 
 $\Aut(G/K)^0$ denote the connected component of the identity in $\Aut(G/K)$.  As the natural map $\varphi\mapsto\bar\varphi$ from 
 $\Aut(G)$ to $\Aut(G/K)$ is a homomorphism, we have that the subgroup $L:=\{\psi\in\Aut(G)\mid\bar\psi\in\Aut(G/K)^0\}$ is a normal 
 subgroup of finite index in $\Aut(G)$. Let $n=|\Aut(G)/L|$. Then $\varphi^n\in L$ for every $\varphi\in\Aut(G)$.

 If $1$ is an eigenvalue of $\du\alpha$ for every $\alpha\in \Aut(G/K)^0$, then $1$ is an eigenvalue of $\du\alpha$ for every $\alpha\in L$. 
 Moreover, $G/K$ is not nilpotent as $G$ is not nilpotent. Hence we may replace $G$ by $G/K$ and assume that $\Aut(G)^0$ is a 
 subgroup of finite index in $\Aut(G)$. Now we show that $1$ is an eigenvalue of $\du\alpha$ for every $\alpha\in\Aut(G)^0$. 
 
 Let $G=SR$ be a Levi decomposition, where either $G$ is solvable or $S$ is nontrivial. Suppose $G$ is solvable. For the nilradical $N$ 
 of $G$, we show that $\Aut(G)^0$ acts trivially on $G/N$. Note that $\Aut(G)^0$ is a connected Lie group and $M:=\Aut(G)^0\ltimes G$ is 
 also a connected Lie group. Let $\Aut(G)^0=S'R'$ be a Levi decomposition. Then $R'G$ is the radical of $M$. As $[R'G,G]$ is connected, 
 nilpotent and normal in $M$ and it is contained in $G$, we get that $[R'G, G]\subset N$ and hence $R'$ acts trivially on $G/N$. Note that  
 $[S',G]$ is connected and it is contained in $G$; moreover, it is contained in the nilradical (say) $N'$ of $M$, and hence it is nilpotent. 
 Thus $[S',G]\subset (N'\cap G)^0\subset N$. Hence $S'$ also acts trivially on $G/N$. Hence, $\Aut(G)^0$ acts trivially on $G/N$. Thus 
 every element $\alpha\in\Aut(G)^0$ acts trivially on $G/N$. Thus the assertion follows when $G$ is solvable (but not nilpotent).
 
 Now suppose $G$ is not solvable. Then $G/R$ is semisimple, and $\Aut(G)^0$ acts on $G/R$ by
 inner automorphisms. Hence we may replace $G$ by $G/R$ and assume that $G$ is semisimple and $\Aut(G)^0$ 
 consists of inner automorphisms. Let $\alpha\in\Aut(G)^0$. Then $\alpha=i_x$ for some $x$ in $G$. The center $Z$ of $G$ is discrete.  
 We may replace $G$ by $G/Z$ and assume that $G$ is a connected semisimple linear group with trivial center. Now $G$ is almost algebraic. If $x=e$, 
 then $\alpha=\Id$, and the assertion follows trivially. Now suppose  $x\ne e$. If $x$ has finite order, i.e.\ $x^n=e$ for some $n\in\N\mi\{1\}$, 
 then $x$ belongs to a maximal torus (maximal compact connected abelian subgroup) $T$ of $G$, and $\alpha$ acts trivially on $T$, and hence 
 $1$ is an eigenvalue of $\du\alpha$. Now suppose $x$ has infinite order. As $G$ is almost algebraic, there exists an almost algebraic subgroup 
 $A$ of $G$ containing $x$. Then $A$ is abelian and has finitely many connected components. Moreover, $A^0$ is nontrivial as $x$ does not have 
 finite order. As $x$ centralises $A^0$, $\alpha$ acts trivially on $A^0$. Hence $1$ is an eigenvalue of $\du\alpha$. Thus the assertion follows.
 \end{proof}
 
 \begin{remark} \label{autg} The proof of Lemma {\rm\ref{Autgn}} shows that for any connected Lie group $G$, $\Aut(G)$ has a 
 subgroup of finite index 
 which acts trivially on $R/N$ and it acts by inner automorphisms on $G/R$, where $R$ and $N$ are respectively the 
 radical and the nilradical of $G$. Note also that any subgroup of finite index in $\Aut(G)$ contains $\Aut(G)^0$.
 \end{remark}
 
 \begin{proof}[Proof of Theorem {\rm\ref{ConnLie}}] Let $G$ be a connected Lie group. We first show that $R(\Id)=\infty$. If $G$ is solvable, the
 assertion follows from \Cref{Solv}; it is also easy to show this directly as $G/\ol{[G,G]}$ is infinite and abelian, and all its 
 conjugacy classes are singleton sets. Now suppose $G$ is not solvable. Then $G/R$ is semisimple, where $R$ is the radical of $G$, and 
by \Cref{endo}, we may assume that $G$ is semisimple. 
 
 Suppose $G$ is compact. Then $G$ has a maximal compact connected abelian subgroup (maximal torus) $T$ which is nontrivial. Let 
 $F_n$ be the set of all $n$th roots of unity in $T$, $n\in\N$. Then $|F_n|\ge n$, $F_n\subset F_{mn}$, $m,n\in\N$, and for every $x\in F_n$, 
 $\{gxg^{-1}\mid g\in G\}$ consists of $n$th roots of unity. Then for primes $p, q$, $p\ne q$, $x\in F_p\mi\{e\}$ and $y\in F_q$,  
 $[x]_{\Id}\cap [y]_{\Id}=\emptyset$. Since there are infinitely many primes, we have that $G$ has infinitely many conjugacy classes. Note that for 
 any connected Lie group $G$, if it has a nontrivial maximal torus, then it has infinitely many conjugacy classes. However, there are connected 
semisimple Lie groups without any nontrivial compact connected abelian subgroups; for example $\widetilde{\SL(2,\R)}$, the universal cover 
of $\SL(2,\R)$.
 
 Now suppose $G$ is not  compact. Note that $\Ad x=\du (i_x)$, $x\in G$, and $\Ad$ is a homomorphism from $G\to\GL(\G)$, and $\Ad(G)$ 
 is semisimple, as $G$ is so. Choose $x\in G$, such that at least one eigenvalue of $\Ad x$ has absolute value other than $1$. 
 (One can choose $x$ 
 such that $\Ad x$ is a nontrivial element of $A$, where $A$ is such that $\Ad G=KA\,U$ is an Iwasawa decomposition and $A$ consists of 
 semisimple elements.) Let $\lambda$ be an eigenvalue of $\Ad x$ whose absolute value is either less than $1$ or greater than $1$.  Now
  $\lambda^n$ is an eigenvalue of $\Ad x^n$, and $|\lambda^n|\ne 1$, $n\in\N$. Note that $|\lambda^n|=|\lambda|^n$ converges to zero or infinity. 
  Since the eigenvalues of $\Ad (gxg^{-1})=\Ad g\,\Ad x\,\Ad g^{-1}$ are same as that of $\Ad x$, $[x^m]_{\Id}\ne [x^n]_{\Id}$, for all $m,n\in\N$ 
  such that $m\ne n$. Thus $G$ has infinitely many conjugacy classes.

To prove the second assertion, suppose that $G$ is not nilpotent. By \Cref{Autgn}, there exists a  normal subgroup (say) $L$ of finite index 
in $\Aut(G)$ such that $1$ is an eigenvalue of $\du\alpha$ for every $\alpha\in L$. Moreover, $L$ acts on $G/R$ by inner automorphisms and 
it acts trivially on $R/N$ (see Remark \ref{autg}). If $G=R$, then by \Cref{Solv}, $R(\alpha)=\infty$ for every $\alpha\in L$. Now suppose $G$ 
is not solvable. Then $G/R$ is semisimple. Then $L$ acts on $G/R$ by inner automorphisms and hence $R(\bar\alpha)=\infty$ for every 
$\alpha\in L$, where $\bar\alpha$ is the automorphism of $G/R$ induced by $\alpha$. Thus $R(\alpha)=\infty$ for every $\alpha\in L$, 
and the second assertion holds. The last assertion follows easily from the second assertion for $n=|\Aut(G)/L|$. 
 \end{proof}

\section{Lie groups with topological $R_\infty$-property}

 In this section, we first discuss sufficient conditions for connected solvable Lie groups to have topological $R_\infty$-property in \Cref{Solv-infty}. Using
  the theorem,  we prove that for $n\geq 2,$ the group of invertible $n\times n$-upper triangular real matrices and its quotient group modulo its center have 
  topological $R_\infty$-property (see \Cref{upper} and \Cref{pbn}). We show that $\SO(2,\R)\ltimes\R^2$ and certain groups of the form 
  $\R\ltimes\R^2$ do not have this property (see \Cref{non-ex}), while the Walnut group has this property (see \Cref{walnut}). 
  We also show that $\SL(2,\R)$ and $\GL(2,\R)$ have this property (see \Cref{SL2}). We give examples of many solvable and non-solvable 
  Lie groups with this property. 
 
 We first note some properties of Cartan subgroups in a Lie group which will be useful in proving \Cref{Solv-infty}. 
 We know that a Cartan subgroup of a connected solvable Lie group $G$ is 
 a maximal nilpotent subgroup $C$ in $G$ such that $G=CN$, where $N$ is the nilradical of $G$ \cite[Corollary 5.2]{MS2}; this can also be deduced from 
 Proposition 3.1 in \cite{MS1} and Lemma 9 in \cite{Wi}.  All the Cartan subgroups of a solvable Lie group are connected \cite[Theorem 1.9]{Wu}, and they 
 are conjugate to each other \cite[ Proposition 6]{Wi}.  If $C$ is a Cartan subgroup of $G$, then so is $\varphi(C)$ for every $\varphi\in\Aut(G)$, and as 
 $G$ is solvable, $\varphi(C)=x^{-1}Cx$ for some $x\in G$, i.e.\ $i_x\circ\varphi$ keeps $C$ invariant. Thus we have that a connected solvable Lie group 
 $G$ has topological $R_\infty$-property if and only if $R(\varphi)=\infty$ for every $\varphi\in\Aut(G)$ which keeps a Cartan subgroup invariant. 
 
We already know that for any automorphism $\varphi$ of a connected solvable Lie group $G$, the corresponding automorphism $\bar\varphi_1$ 
 on $G/N$ has finite order, and hence all the eigenvalues of $\du\bar\varphi_1$ have absolute value $1$ (see Remark 3.3 in \cite{CS}, see also \Cref{autg}). 
So if $\du\bar\varphi_1$ has a real eigenvalue (this holds in particular if the dimension of $G/N$ is odd), then it must be either $1$ or $-1$. 
\Cref{Solv-infty} shows that under a certain condition, $1$ is an eigenvalue of $\du\bar\varphi_1$ and hence that of $\du\varphi$, and  
$R(\varphi)=\infty$. The theorem will enable us to  prove topological $R_\infty$-property for many solvable Lie groups.

\begin{thm} \label{Solv-infty} Let $G$ be a connected solvable Lie group which is not nilpotent. Suppose $G$ has a closed connected 
one-dimensional normal subgroup $V$ which is not central in $G$.  Then the following hold:
\begin{enumerate}
\item[$(1)$]  If $V$ is $\varphi$-invariant, for some $\varphi\in\Aut(G)$, then $1$ is an eigenvalue of $\du\varphi$ as well as that of $\du\bar\varphi$ and 
$R(\varphi)=R(\bar\varphi)=\infty$, where $\bar\varphi$ is the automorphism of $G/Z$ induced by $\varphi$ for the center $Z$ of $G$. 
\item[$(2)$] In particular, if $V$ as above is characteristic in $G$, then $G$ has topological $R_\infty$-property. 
\end{enumerate}
\end{thm}

\begin{proof} Let $G$ and $V$ be as in the hypothesis. Note that any compact connected abelian normal subgroup of $G$ is central in $G$. 
Since $V$ is one-dimensional and it is not central in $G$, it is a vector group. Let $N$ be the nilradical of $G$.  
Then $V\subset N$ and it is normal in $N$. Moreover, since $N$ is nilpotent and $V$ is connected, $[N,V]$ is a proper connected subgroup of $V$. 
Since $V$ is one-dimensional, it follows that $[N,V]$ is trivial, and hence  $V$ is central in $N$.

Let $C$ be a Cartan subgroup of $G$. Then $G=CN$ and $C\ne N$ and $C\ne G$ as $N$ is a proper subgroup of $G$. Since $V$ is connected, abelian 
and normal in $G$, we have that $C\cap V$ is connected \cite[Theorem 1.9]{Wu}. We show that $V\cap C=\{e\}$. If possible, suppose $V\cap C$ 
is nontrivial. Since it is connected and $V$ is one-dimensional, we get that $V\subset C$, and $V$ is a normal subgroup of $C$. Since $C$ is nilpotent, arguing 
as above, we get that $V$ is central in $C$.  As $V$ is central in $N$ and $G=CN$, we get that $V$ is central in  $G$, 
which leads to a contradiction. Thus $V\cap C=\{e\}$.

\smallskip
\noindent{\bf (1):}   
 Let $\varphi\in\Aut(G)$ be such that $\varphi(V)=V$. There exists $x\in N$ such that $\varphi (C)=x^{-1}Cx$, and hence 
 $i_x\circ\varphi(C)=C$. Since $V$ is $\varphi$-invariant and normal in $G$, we have that $i_x\circ\varphi(V)=V$. 
Moreover $(i_x\circ\varphi)|_Z=\varphi|_Z$, and hence the automorphism induced by $i_x\circ\varphi$ on $G/Z$ is 
$i_{\bar x}\circ\bar\varphi$, where $\bar x=xZ\in G/Z$. As $R(\varphi)=R(i_x\circ\varphi)$ and $R(\bar\varphi)=R(i_{\bar x}\circ\bar\varphi)$ 
(cf.\  \Cref{inner}), we may replace $\varphi$ by $i_x\circ\varphi$ and assume that $\varphi(C)=C$. Since $C$ normalises $V$, we have that 
$C$ acts on $V$ by inner automorphisms. Let $Z_C(V)$ be the centraliser of $V$ in $C$. Then $Z_C(V)$ is a closed normal subgroup of $C$. 
Note that $C$ does not centralise $V$; otherwise $CV$ would be nilpotent contradicting the fact that $C$ is  maximal nilpotent subgroup in $G$. Thus 
$Z_C(V)$ is a proper subgroup of $C$. By Theorem 1.5 of \cite{MS1}, $CV/V$ is a Cartan subgroup of $G/V$, 
and hence $CV$ is closed, and it is a connected (Lie) subgroup of $G$. Moreover, as both $V$ and $C$ are $\varphi$-invariant, so are $CV$ and $Z_C(V)$,  
and $C$ is a Cartan subgroup of $CV$ \cite[Lemma 3.12]{MS1}. Note that $Z_C(V)$ is normal in $C$, and hence in $CV$. Then $CV/Z_C(V)=L\ltimes V$, 
where $L:=C/Z_C(V)$. As $V$ is a one-dimensional vector group, and $C$ is connected, the action of $C$ on $V$ is via the action of $\R^*_+$. 
Thus $L=C/Z_C(V)$ is isomorphic to $\R^*_+$.  Let $H:=L\ltimes V$. Then $H$ is a 
connected solvable Lie group which is not nilpotent as $L$ is a Cartan subgroup of $H$ \cite[Theorem 1.5]{MS1}, and $V$ is the nilradical of $H$. 
Moreover, $\varphi$ induces an automorphism (say) $\psi$ on $H$ such that $\psi|_L$ is the automorphism induced by $\varphi|_C$ on its quotient group 
$L$ and $\psi|_V=\varphi|_V$.  As the dimension of $L$ is $1$,  $\du(\psi|_L)$ has a unique real eigenvalue. Since $H/V$ is isomorphic to $L$, by 
Remark 3.3 of \cite{CS} (see also \Cref{autg}), we have that the eigenvalue of $\du(\psi|_L)$ is a root of unity, and hence it is either $1$ or $-1$, 
i.e.\ $\psi(t)=t$ for all $t\in L$, or $\psi(t)=t^{-1}$ for $t\in L$. Also, as $V$ is a vector group, $\psi(r)=\beta r$, $r\in V$, for some $\beta\in\R^*$.

We now show that $\psi|_L=\Id$. If possible, suppose $\psi(t)=t^{-1}$ for all $t\in L$. 
Note that the conjugation action of $L$ on $V$ is given by $i_t(r)=t^\alpha r$ and $i_{t^{-1}}(r)=t^{-\alpha}r$, $r\in V$ for some $\alpha\in\R$.
 Here, $\alpha\ne 0$ as $H$ is not abelian. Then we get that $i_{t^{-1}}\circ \psi=\psi\circ i_t$ for all $t\in\R$. This implies that $t^{-\alpha}\psi(r)=\psi(t^{\alpha}r)$ and hence 
  $t^{-\alpha}\beta r=t^{\alpha}\beta r$ for all $r\in V$. As $\beta\ne 0$, we get that $t^{2\alpha}=1$ which would imply that $\alpha=0$, this leads to a contradiction. 
  Thus $\psi|_L=\Id$. Since $\psi|_L$ is the automorphism induced by 
$\varphi|_C$, we get that $1$ is an eigenvalue of $\du(\varphi|_C)$, and hence that of $\du\varphi$. By \Cref{Solv}, $R(\varphi)=\infty$. Also since the center $Z$ is 
contained in $C$, and hence in $Z_C(V)$, we get that $1$ is an eigenvalue of $\du(\bar\varphi|_{(C/Z)})$. Hence $1$ is an eigenvalue of $\du\bar\varphi$. 
Thus $R(\bar\varphi)=\infty$. (We may note here that $L$ is isomorphic to $G/(Z_C(V)N)$, and hence the eigenvalue $1$ of $\du(\psi|_L)$ is an 
eigenvalue of the quotient Lie algebra automorphism $\du\bar\varphi_1$, where $\bar\varphi_1\in\Aut(G/N)$ is induced by $\varphi$.)

\smallskip
\noindent{\bf (2):} Since $V$ is characteristic in $G$, from (1) we have that  $R(\varphi)=\infty$ for every $\varphi\in\Aut(G)$. Hence $G$ has 
topological $R_\infty$-property. 
\end{proof}

In the following Corollary, $\R^*=\R\mi\{0\}$ is the usual multiplicative group, its connected component of the identity $\R^*_+$ is isomorphic 
to $\R$, and they both act on the additive group $\R$ by automorphisms. Note that both $\R^*$ and $\R^*_+$ do not have (topological) 
$R_\infty$-property, as the automorphism $x\mapsto x^{-1}$ on $\R^*$ (resp.\ $\R^*_+$) has Reidemeister number $2$ (resp.\ $1$).

\begin{cor} \label{axpb}
$\R^*\ltimes\R$ and $\R\ltimes\R$ have topological $R_\infty$-property. 
\end{cor}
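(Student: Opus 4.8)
The plan is to apply \Cref{Oddsolv} to the connected group $\R\ltimes\R$, and then to deduce the result for the disconnected group $\R^*\ltimes\R$ by passing to its identity component via \Cref{inv}.

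First I would treat $G=\R\ltimes\R$, the connected $ax+b$-type group in which $t\in\R$ acts on $r\in\R$ by $t\cdot r=e^t r$ (identifying the acting factor with $\R^*_+$). It is connected and solvable, and it is not nilpotent since its lower central series does not terminate. Its nilradical is the normal subgroup $N=\{0\}\times\R\cong\R$, so $G/N\cong\R$ has dimension $1$, which is odd. It then remains to verify the hypothesis on $\Ad G$: a direct computation of conjugation by $(t,r)$ shows that, in the basis $(X,Y)$ of the Lie algebra with $[X,Y]=Y$, the matrix of $\Ad(t,r)$ is lower triangular with diagonal entries $1$ and $e^t$, so every element of $\Ad G$ has real (indeed positive) eigenvalues. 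With all hypotheses of \Cref{Oddsolv} checked and $\dim(G/N)$ odd, the theorem gives that $\R\ltimes\R$ has topological $R_\infty$-property.

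Next, for $G=\R^*\ltimes\R$ I would note that this group is not connected, since $\R^*$ has two components, so \Cref{Oddsolv} does not apply directly; this is the one genuinely non-automatic point. Its identity component is $G^0=\R^*_+\ltimes\R$, which is isomorphic to $\R\ltimes\R$ via $\R^*_+\cong\R$, and $G^0$ is a closed characteristic subgroup with $G/G^0\cong\Z_2$ finite. For an arbitrary $\varphi\in\Aut(G)$, since $G^0$ is characteristic, $\varphi':=\varphi|_{G^0}$ is a continuous automorphism of $G^0$, and by the previous paragraph $R(\varphi')=\infty$. The induced automorphism $\bar\varphi$ on the finite group $G/G^0$ satisfies $|\Fix(\bar\varphi)|\le 2<\infty$. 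Hence \Cref{inv}\,(1) yields $R(\varphi)=\infty$, and since $\varphi$ was arbitrary, $\R^*\ltimes\R$ has topological $R_\infty$-property.

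The main obstacle is the lack of connectedness of $\R^*\ltimes\R$, which I would resolve exactly as above by combining the connected case with the finiteness of $\Fix(\bar\varphi)$ on the two-element component group through \Cref{inv}\,(1); the only computation required is the routine determination of the eigenvalues of $\Ad G$ in the connected case.
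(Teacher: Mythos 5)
Your proposal is correct and follows essentially the same route as the paper: apply \Cref{Oddsolv} to the connected group $\R\ltimes\R$ (nilradical $\R$, one-dimensional quotient, real eigenvalues of $\Ad$), then handle $\R^*\ltimes\R$ by noting that $\R\ltimes\R$ sits inside it as a characteristic subgroup of index $2$ and invoking \Cref{inv}\,(1). The only difference is cosmetic: you spell out the eigenvalue computation for $\Ad G$ slightly more explicitly than the paper does.
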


\begin{proof} Both the groups $\R^*\ltimes\R$ and $\R\ltimes\R$ are solvable but not nilpotent. Moreover, 
$\R^*\ltimes\R$ is isomorphic to $\left\{\begin{bmatrix} 
a & b\cr
0& 1\cr\end{bmatrix}
\mid a,b\in \R, a\ne 0\right\}$, and $\R\ltimes\R$ is isomorphic to 
$\left\{\begin{bmatrix} 
a & b\cr
0& 1\cr
\end{bmatrix}
\mid a,b\in \R, a>0 \right\}$. 
Under this identification, $\R\ltimes\R$ is a closed connected characteristic subgroup of index $2$ in $\R^*\ltimes\R$. Hence by \Cref{inv}\,(1), 
it is enough to prove that $G:=\R\ltimes\R$ has topological $R_\infty$-property. The nilradical $N$ of $G$ is isomorphic to $\R$ which is a characteristic 
subgroup of $G$, and it is not central in $G$. Then by \Cref{Solv-infty}\,(2), $G$ has topological $R_\infty$-property. 
\end{proof}

Let $U_n$ denote the group of $n\times n$ strictly upper triangular real matrices (with all the diagonal entries equal to $1$).
It is shown in \cite[Theorem 4] {N2} that $U_n$ does not have (topological) $R_\infty$-property if $n\ge 3$; this is also true for $n=2$. Here, 
we consider groups of invertible $n\times n$ upper triangular real matrices (with nonzero diagonal entries), $n\ge 2$. Note that for $n=1$, such 
a group is isomorphic to the multiplicative abelian group $\R^*$ which does not have (topological) $R_\infty$-property, as the Reidemeister 
number of the automorphism $x\mapsto x^{-1}$ on $\R^*$ is $2$.

\begin{thm} \label{upper} The group of invertible $n\times n$ upper triangular real matrices has topological $R_\infty$-property for every $n\ge 2$.
\end{thm}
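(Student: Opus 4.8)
The plan is to pass to the identity component and combine \Cref{Oddsolv} with the root-space structure of the Borel subgroup. Write $B_n$ for the group of invertible $n\times n$ upper triangular real matrices. Since the diagonal entries may have arbitrary nonzero signs, $B_n$ is disconnected; its identity component $B_n^0$ consists of the matrices with positive diagonal entries, and $B_n/B_n^0\cong(\Z_2)^n$ is finite. As $B_n^0$ is characteristic, for every $\varphi\in\Aut(B_n)$ the restriction $\varphi'=\varphi|_{B_n^0}$ lies in $\Aut(B_n^0)$, while the automorphism $\bar\varphi$ induced on the finite group $B_n/B_n^0$ has $|\Fix(\bar\varphi)|<\infty$ automatically. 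Hence by \Cref{inv}\,(1) it suffices to prove that $B_n^0$ has topological $R_\infty$-property.

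Next I would record the structure of $G:=B_n^0$. It is connected and solvable, and for $g\in G$ the eigenvalues of $\Ad g$ are the ratios of the positive diagonal entries of $g$ together with $1$; in particular they are all real, so $G$ meets the hypothesis of \Cref{Oddsolv}. Let $U_n=\overline{[G,G]}$ be the unipotent upper triangular subgroup, let $Z^0=\{aI:a>0\}$ be the identity component of the centre, and let $\mathfrak t$ denote the diagonal Cartan subalgebra. The nilradical is $N=Z^0U_n$ (upper triangular matrices with a single common positive diagonal entry), so $G/N$ is a vector group with Lie algebra $\mathfrak t/\R I$; thus $\dim(G/N)=n-1$, and the dual of $\mathfrak t/\R I$ is spanned by the simple roots $\alpha_1,\dots,\alpha_{n-1}$.

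If $n$ is even, then $\dim(G/N)=n-1$ is odd and \Cref{Oddsolv} applies directly, giving that $G$ has topological $R_\infty$-property. The remaining, and main, case is $n$ odd, where $\dim(G/N)$ is even and the odd-dimension criterion is unavailable. Here I would show directly that $1$ is an eigenvalue of $\du\bar\varphi$ for every $\varphi\in\Aut(G)$. Fix such a $\varphi$. Since the Cartan subgroups of $G$ are conjugate and $G/N$ is abelian, replacing $\varphi$ by $i_g\circ\varphi$ for a suitable $g$ (which alters neither $R(\varphi)$, by \Cref{inner}, nor the map induced on $G/N$) lets me assume that $\varphi$ preserves the diagonal torus, so that $\du\varphi$ preserves $\mathfrak t$. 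Because $U_n$ and $[U_n,U_n]$ are characteristic, $\du\varphi$ acts on $\mathfrak u/[\mathfrak u,\mathfrak u]=\bigoplus_i\mathfrak g_{\alpha_i}$, and since it normalises the action of $\mathfrak t$ it permutes the distinct one-dimensional weight spaces $\mathfrak g_{\alpha_i}$ by some permutation $\sigma$. Comparing weights then gives $(\du\varphi|_{\mathfrak t})^*\alpha_{\sigma(i)}=\alpha_i$ exactly, the scalars cancelling, so in the basis of $\mathfrak t/\R I$ dual to the simple roots $\du\bar\varphi$ is the permutation matrix $P_\sigma$. As $P_\sigma$ fixes the all-ones vector, $1$ is an eigenvalue of $\du\bar\varphi$, hence of $\du\varphi$; by \Cref{Solv}, $R(\varphi)=\infty$. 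As $\varphi$ was arbitrary, $G$, and therefore $B_n$, has topological $R_\infty$-property.

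The step I expect to be the main obstacle is the structural claim in the odd case: that, after adjusting by an inner automorphism, $\du\varphi$ preserves a fixed Cartan and carries each simple root space to another simple root space, thereby inducing an honest permutation of $\{\alpha_1,\dots,\alpha_{n-1}\}$ on $G/N$. This is precisely the point at which one must control $\Aut(B_n^0)$ modulo inner automorphisms—identifying the simple root spaces intrinsically as the canonical generators of $U_n$ and verifying that $\du\varphi$ cannot mix them. By contrast, the even case is immediate from \Cref{Oddsolv}, and one can in fact run the permutation argument uniformly for all $n\ge 2$, the even case then serving as a consistency check.
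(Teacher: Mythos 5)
Your proposal is correct, and in the decisive case it takes a genuinely different route from the paper. The shared skeleton is the same: reduce to the identity component $B_n^0$ via \Cref{inv}\,(1), identify the nilradical as $N=Z^0U_n$ so that $\dim(G/N)=n-1$, dispose of even $n$ by \Cref{Oddsolv}, and normalise $\varphi$ by an inner automorphism so that it preserves the diagonal Cartan subgroup $C$ (conjugacy of Cartan subgroups, with $i_x$ acting trivially on $G/N$). Where you diverge is in how you extract the eigenvalue $1$ of $\du\bar\varphi$ when $n$ is odd. The paper works with $L$, the last term of the derived series (the centre of $U_n$), carries out an explicit coordinate computation in the group $CL$ to show that $\varphi|_C$ preserves the hyperplane $\{x_1=x_n\}$, concludes that $\du\bar\varphi$ has a real eigenvalue, and then needs two further inputs to pin it down to $1$: the fact that eigenvalues of $\du\bar\varphi$ are roots of unity (Remark 3.3 of \cite{CS}) and the first assertion of \Cref{Oddsolv} to exclude $-1$. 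You instead observe that $\du\varphi$ preserves $\mathfrak t$, $\mathfrak u$ and $[\mathfrak u,\mathfrak u]$, hence permutes the one-dimensional, pairwise distinct weight spaces of $\mathfrak t$ on $\mathfrak u/[\mathfrak u,\mathfrak u]$, so that in the basis of $(\mathfrak t/\R I)^*$ given by the simple roots the dual of $\du\bar\varphi$ is a permutation matrix and therefore has eigenvalue $1$. The intertwining relation $\du\varphi([H,X])=[\du\varphi(H),\du\varphi(X)]$ together with the distinctness of the weights $\alpha_i$ fully justifies the permutation claim you flag as the main obstacle, so there is no gap there. Your argument buys uniformity (it works for all $n\ge2$ without the parity split, making the even case a consistency check), avoids the explicit $CL$ computation, and does not need the root-of-unity remark or the $-1$ exclusion from \Cref{Oddsolv}; it also yields finer information, namely that $\du\bar\varphi$ acts on $G/N$ as a permutation of the simple roots. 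The paper's computation, on the other hand, is more elementary and self-contained, using only matrix multiplication in $CL$ rather than the weight-space formalism.
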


\begin{proof}
Let $G$ be the group of invertible $n\times n$ upper triangular real matrices for some $n\in\N$, $n\ne 1$. Then it is a solvable Lie group which is 
not nilpotent. Also, $G^0$, the connected component of the identity in $G$, is a group of 
elements in $G$ with positive entries on the diagonal and it is a characteristic subgroup of finite index in $G$. Now by \Cref{inv}\,(1), it
is enough to show that $G^0$ has topological $R_\infty$-property. Hence we may replace $G$ by $G^0$, and assume that $G$ is 
a connected solvable Lie group consisting of upper triangular real matrices with positive entries on the diagonal, i.e.\ 
$$
G=\{(a_{ij})\mid a_{ij}\in\R, a_{ij}=0 \mbox{ if } j<i \mbox{ and } a_{ii}> 0, 1\le i,j\le n\}.$$ 
The nilradical $N$ of $G$ is the group $ZU$, where $Z$ is the center of $G$ consisting of diagonal matrices  with same diagonal entries 
in $G$ (positive scalar matrices), and $U:=U_n$ is as above. Since $G$ is solvable, the last nontrivial subgroup in the derived series of $G$ is 
$$
V=\{(a_{ij})\mid a_{1n}\in\R, a_{ii}= 1, a_{ij}=0\mbox{ if }1\le i\ne j\le n \mbox{ except for }a_{1n}\}.$$

Note that $V$ is closed, connected and characteristic in $G$ and it is the center of $U$. But $V$ is not central in $G$; for if $Z_G(V)$ 
denotes the centraliser of $V$ in $G$, then
$$Z_G(V)=\{(a_{ij})\in G\mid a_{11}=a_{nn}\}\ne G.$$ 
As the dimension of $V$ is $1$, by  \Cref{Solv-infty}\,(2), $G$ has topological $R_\infty$-property.  
\end{proof}

Note that in \cite{BB2}, Borel subgroups of a semisimple algebraic group over an algebraically closed field of characteristic zero are considered and
it is shown that they have algebraic $R_\infty$-property, i.e.\ $R(\phi)=\infty$ for every algebraic automorphism $\phi$. Borel subgroup in a connected 
Lie (resp.\ algebraic) group is a maximal closed (Zariski) connected solvable subgroup. In \Cref{upper}, we have seen that in $\GL(n,\R)$, $n\geq 2$, 
a Borel subgroup has topological $R_\infty$-property, and this holds for all Borel subgroups as they are conjugate to each other. 
In \cite{LR2}, the quotient group of a Borel subgroup modulo its center is considered over some specific integral domains which do not include $\R$. 
Here we show the following.

\begin{prop} \label{pbn} Let $G$ be a group of invertible $n\times n$ upper triangular real matrices for $n\ge 2$ and let $Z$ be the center of $G$. 
Then $G/Z$ has topological $R_\infty$-property. 
\end{prop}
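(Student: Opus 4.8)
The plan is to mimic the proof of \Cref{upper}, now after dividing out by the centre. First I would identify $Z$ as the group of nonzero scalar matrices, which is central and isomorphic to $\R^*$; its identity component $Z^0$ (positive scalars) is isomorphic to $\R$. Since $(G/Z)^0$ is a closed characteristic subgroup of finite index in $G/Z$, the automorphism induced on the finite quotient $G/Z$ over $(G/Z)^0$ has finite fixed-point set, so by \Cref{inv}\,(1) it is enough to prove that $(G/Z)^0$ has topological $R_\infty$-property. A short component count (using that the image of $Z$ in $G/G^0$ is generated by $-\Id$) shows $(G/Z)^0 = G^0Z/Z \cong G^0/Z^0$, so I would set $H := G^0/Z^0$ and work with this connected solvable (non-nilpotent, for $n\ge 2$) Lie group.

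Next I would pin down the nilradical and the relevant dimension. Writing $N = Z^0U$ for the nilradical of $G^0$ (as in \Cref{upper}, with $U = U_n$), I claim the nilradical of $H$ is $\bar N := N/Z^0 \cong U_n$: any connected normal nilpotent subgroup of $H$ pulls back to a connected normal subgroup of $G^0$ containing the central $Z^0$, which is therefore itself nilpotent and hence contained in $N$. The eigenvalue hypothesis of \Cref{Oddsolv} is inherited, because the Lie algebra of $H$ is a quotient of $\G$ and the eigenvalues of any element of $\Ad H$ form a subset of those of the corresponding element of $\Ad G^0$, which are all real. Finally $\dim(H/\bar N) = \dim(G^0/N) = n-1$, exactly the quantity governing \Cref{upper}. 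Thus when $n$ is even, $n-1$ is odd and \Cref{Oddsolv} immediately yields that $H$, and hence $G/Z$, has topological $R_\infty$-property.

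It remains to treat $n$ odd, where $\dim(H/\bar N)$ is even and \Cref{Oddsolv} does not apply directly; this is the main obstacle, and I would resolve it exactly as in the odd case of \Cref{upper}. For $\varphi \in \Aut(H)$ with induced automorphism $\bar\varphi$ on $H/\bar N$, the eigenvalues of $\du\bar\varphi$ are roots of unity (Remark 3.3 of \cite{CS}), and by the first assertion of \Cref{Oddsolv}, $-1$ is not one of them. The diagonal Cartan subgroup $C$ of $G^0$ descends to $\bar C = C/Z^0$, which is a complement to $\bar N$ in $H$ since $C \cap N = Z^0$, so $\bar\varphi|_{\bar C}$ realises $\du\bar\varphi$; after composing $\varphi$ with an inner automorphism by an element of $\bar N$, which leaves $\bar\varphi$ unchanged, I may assume $\varphi(\bar C) = \bar C$. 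The image $L$ of the centre of $U_n$ (the $a_{1n}$-entry subgroup) survives in $H$ as $Z^0 \cap L$ is trivial, and running the homomorphism identity on $\bar C L$ as in \Cref{upper} produces a $\bar\varphi$-invariant subspace of $\bar C$ of codimension one, namely the descent of $\{x_1 = x_n\}$, which contains $Z^0$. Hence $\bar\varphi|_{\bar C}$ has a real eigenvalue, which must be $1$, so $1$ is an eigenvalue of $\du\varphi$ and \Cref{Solv} gives $R(\varphi) = \infty$. As $\varphi$ is arbitrary, $G/Z$ has topological $R_\infty$-property. The delicate point throughout is this odd case: in even dimension a real eigenvalue of $\bar\varphi$ is not automatic, and one must reproduce the commutator computation of \Cref{upper} after deleting the central factor $Z^0$.
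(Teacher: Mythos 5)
Your proof is correct and follows essentially the same route as the paper: reduce to $H=G^0/Z^0$, identify the nilradical of $H$ with $U_n$, dispose of even $n$ via \Cref{Oddsolv}, and for odd $n$ produce a codimension-one $\varphi$-invariant subgroup of the Cartan subgroup $\bar C=C/Z^0$ to force a real eigenvalue of $\du\bar\varphi$, which must then be $1$ by \Cref{Oddsolv} and \Cref{autg}, so \Cref{Solv} applies. The only cosmetic difference is that the paper identifies the invariant hyperplane as the centraliser of the characteristic one-dimensional subgroup $L$ in $\bar C$, whereas you rederive the same subgroup (the descent of $\{x_1=x_n\}$) by repeating the explicit homomorphism-identity computation of \Cref{upper}; the two descriptions coincide, so nothing is gained or lost.
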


\begin{proof} The center $Z$ of $G$ is also the center of $\GL(n,\R)$, which is the group of diagonal matrices with same entries, i.e.\ scalar matrices. 
Here, both $G$ and  $G/Z$ are solvable Lie groups with finitely many connected components and $G^0Z$ is closed in $G$. Note that $G^0Z/Z$ is the 
connected component of the identity in $G/Z$ and it is a characteristic subgroup of $G/Z$. Moreover, $G^0Z/Z$ is isomorphic to $G^0/(G^0\cap Z)$.  
By \Cref{inv}\,(1), it is enough to show that $G^0/(G^0\cap Z)$ has topological $R_\infty$-property.  As $G^0\cap Z$ is the center of $G^0$, we may replace 
$G$ by $G^0$ and assume that $G$ is a connected Lie group of upper triangular real matrices with positive diagonal entries, and $Z$ is the group 
of diagonal matrices whose all entries are same and positive. Let $V$ be as in the proof of \Cref{upper}. Then $V'=VZ/Z$ is a closed connected 
characteristic vector subgroup of dimension $1$ in $G/Z$. It is easy to see that $V'$ is not central in $G/Z$. Now by \Cref{Solv-infty}\,(2), 
$G/Z$ has topological $R_\infty$-property. 
\end{proof}

Note that \Cref{pbn} also holds if $G$ is any Borel subgroup of $\GL(n,\R)$, $n\geq 2$. 
Observe that if $H$ is the subgroup consisting of upper triangular matrices in $\SL(n,\R)$,  then $H/H^0$ is finite and $H^0$ is isomorphic 
to $G^0/(G^0\cap Z)$, where $G$ and $Z$ are as in \Cref{pbn}. As shown in the proof of \Cref{pbn}, $H^0$ has  topological 
$R_\infty$-property, and so does $H$, by Lemma \ref{inv}\,(1), Therefore, all Borel subgroups of $\SL(n,\R)$ have this property, $n\geq 2$. 
We now construct some more connected $m$-step solvable non-nilpotent Lie groups with this property for all $m\geq 2$.

\begin{example}
For the unipotent group $U_n$ of $n\times n$ strictly upper triangular real matrices,  let $H_n=\R\ltimes U_n$, $n\geq 2$, where the action 
of any $t\in\R^*_+$ on $U_n$ is given by $(a_{ij})\mapsto (b_{ij})\in U_n$, where 
$b_{ij}=a_{ij}=0$, if $j< i$, $b_{ii}=a_{ii}=1$ and $b_{ij}=t^ka_{ij}$, where $k=j-i$ if $j>i$, for $i,j\in\{1,\ldots,n\}$. Then each $H_n$ is a 
connected solvable Lie group and satisfies the conditions stated in \Cref{Solv-infty}\,(2), as the last group in the central series of $U_n$ 
is characteristic in $G$ and is not central in $G$. Hence $H_n$ has topological $R_\infty$-property, 
for every $n\in\N$, $n\geq 2$. (The case $n=2$, where $H_2$ is isomorphic to $\R\ltimes\R$, is already covered in \Cref{axpb}.)
\end{example}

The condition in \Cref{Solv-infty} is necessary, as there are examples of both simply connected and non simply connected $2$-step solvable Lie groups without any 
characteristic one-dimensional non-central subgroups, which do not have (topological) $R_\infty$-property. We first note that $\SO(2,\R)$, 
the special orthogonal group (the group of rotations on $\R^2$), is a one-dimensional torus 
and the normaliser of $\SO(2,\R)$ in $\GL(2,\R)$ is $(\Hk\cup\{I\})\D\,\SO(2,\R)$, where $I$ is the identity matrix, $\Hk=\{\varrho,\xi\}$ with  
$$
\varrho=\begin{bmatrix} 
1 & 0\cr
0& -1\cr
\end{bmatrix} \ \mbox{ and } \ \xi=\begin{bmatrix} 
0 & 1\cr
1& 0\cr
\end{bmatrix},$$
and $\D=\{rI\mid r\in\R^*\}$, the center of $\GL(2,\R)$. The centraliser of $\SO(2,\R)$ in $\GL(2,\R)$ is $\D\,\SO(2,\R)$.

\begin{example} \label{non-ex} Let $G=\R\ltimes\R^2$, with $\R$ being a multiplicative group isomorphic to $\R^*_+$, in which the action of  
$\R$ on $\R^2$ is given by $(x,y)\mapsto (tx, t^{-1}y)$, $(x,y)\in\R^2$, $t\in\R=\R^*_+$. 
Let $\phi\in\Aut(G)$ be defined as follows: $\phi(t, (x,y))=(t^{-1}, (\alpha y,\alpha x))$, for $(x,y)$ and $t$ as above, and any fixed   
$\alpha\in\R\mi\{0\}$; i.e.\ $\phi|_{\R^2}=\alpha\xi$ for $\xi$ as above. Now if we take any $\alpha\in \R\mi\{0,1\}$, then $\du\phi$ does not have
$1$ as an eigenvalue. Then $R(\phi)=1$ by \Cref{Solv}. Note that the center of $G$ is trivial and $G$ does not have any one-dimensional 
characteristic subgroup.

Now we consider a different action of $\R$ on $\R^2$.  Let $G=\R\ltimes \R^2$, where $\R$ is an additive group and the action 
of $\R$ on $\R^2$ is given by $x\mapsto e^{2i\pi t}x$, $x\in\R^2$, $t\in\R$. Here, the center $Z$ of $G$ is contained in $\R$, and it is 
isomorphic to $\Z$. Neither $G$ nor $G/Z$ has any one-dimensional characteristic (non-central) subgroup. 
Here $G$ is simply connected, while $G/Z$ is not. Let $\varphi:G\to G$ be defined as follows: 
$\varphi(t,x)=(-t, \varphi'(x))$, $t\in\R$, $x\in\R^2$, where $\varphi'=2\varrho$, for $\varrho$ as above. Then $\varphi\in\Aut(G)$. Since $G$ 
is solvable and $1$ is not an eigenvalue of $\varphi$, by \Cref{Solv}, $R(\varphi)=1$. Let  $\varphi_1$ be the automorphism of $G/Z$ which 
is isomorphic to $\SO(2,\R)\ltimes\R^2$. We have that $R(\varphi_1)=1$. Note also that $NZ/Z$ is the nilradical of $G$ which is isomorphic 
to $\R^2$. If $\bar\varphi$ (resp.\ $\bar\varphi_1$) is the 
automorphism induced by $\varphi$ (resp.\ $\varphi_1$) on $G/N$ (resp.\ $(G/Z)/(NZ/Z)$), then $-1$ is the only eigenvalue of both $\bar\varphi$ 
on $G/N$ and $\du\bar\varphi_1$. Thus both $\R\ltimes\R^2$ and $\SO(2,\R)\ltimes\R^2$ defined as above do not have (topological) 
$R_\infty$-property. Note that one can choose any $\varphi'$ in $\GL(2,\R)$ that normalises $\SO(2,\R)$ but does not centralise 
$\SO(2,\R)$, and $1$ is not an eigenvalue of $\varphi'$; e.g.\ $\varphi'\in\{r\varrho,r\xi\mid r\in\R\mi \{0,1\}\}$ and the corresponding 
$\varphi$ as well as $\varphi_1$ will not have $1$ as an eigenvalue, and their Reidemeister number will be $1$. 
\end{example}

Examples of connected solvable Lie groups with topological $R_\infty$-property mentioned above are all simply connected. There is 
a connected solvable Lie group which is not simply connected and has topological $R_\infty$-property, even though it does not satisfy the 
condition in \Cref{Solv-infty}\,(2). The group $W:=\SO(2,\R)\ltimes \Hm/D$ is known as the {\it Walnut} group (see, for example,  
\cite{DYS} or \cite{DS}), where $\Hm:=U_3$ is the $3$-dimensional Heisenberg group, the center $Z$ of $\Hm$ is the center of 
$\SO(2,\R)\ltimes\Hm$ and $D$ 
is an infinite discrete (central) subgroup of $Z$ (see the description of the action of $\SL(2,\R)$ on $\Hm$, which acts trivially on the center of 
$\Hm$, in Section 5 before \Cref{PSL2}). As noted above, the quotient group of $W$ modulo its center is $\SO(2,\R)\ltimes\R^2$, which 
does not have (topological) $R_\infty$-property.

\begin{prop} \label{walnut} The Walnut group has topological $R_\infty$-property. 
\end{prop}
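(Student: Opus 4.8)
The plan is to show that $1$ is an eigenvalue of $\du\varphi$ for every $\varphi\in\Aut(W)$ and then conclude by \Cref{Solv}, since $W$ is connected and solvable. First I would record the Lie algebra $\mathfrak w$ of $W$: it is spanned by $T,X,Y,Z$ with $[X,Y]=Z$, $[T,X]=Y$, $[T,Y]=-X$, and $Z$ central, where $\langle X,Y,Z\rangle$ is the Heisenberg part and $T$ generates the rotation action of $\SO(2,\R)$ (quotienting by the discrete $D$ does not change $\mathfrak w$). Then the center of $\mathfrak w$ is $\langle Z\rangle$, the nilradical $N$ has Lie algebra $\mathfrak n=\langle X,Y,Z\rangle$, and $W/N\cong\SO(2,\R)$. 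Since $\du\varphi$ preserves the characteristic flag $\langle Z\rangle\subset\mathfrak n\subset\mathfrak w$, it is block triangular, so its eigenvalues are $\epsilon_Z$ (its action on $\langle Z\rangle$), the two eigenvalues of the induced map $A\in\GL(2,\R)$ on $\mathfrak n/\langle Z\rangle\cong\langle X,Y\rangle$, and $\epsilon_T$ (its action on $\mathfrak w/\mathfrak n$).

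The decisive point, and where the infinite discrete subgroup $D$ enters, is that the identity component $\bar Z=Z/D$ of the center of $W$ is a circle: being compact, connected and characteristic, it is $\varphi$-invariant, and every continuous automorphism of $\mS^1$ is $\pm\Id$, forcing $\epsilon_Z\in\{\pm1\}$. (Contrast \Cref{r-finite}, where the center is non-compact and this rigidity is lost.) Likewise $W/N\cong\SO(2,\R)$ is a circle, so $\epsilon_T\in\{\pm1\}$. Next I would extract the two identities a Lie algebra automorphism must satisfy: from $[X,Y]=Z$ one gets $\epsilon_Z=\det A$, and from the fact that $\mathrm{ad}\,T$ acts on $\langle X,Y\rangle$ as the rotation generator $J=\bigl(\begin{smallmatrix}0&-1\\1&0\end{smallmatrix}\bigr)$ one gets, working modulo $\langle Z\rangle$, the relation $AJ=\epsilon_T JA$ (the $\mathfrak n$-component of $\du\varphi(T)$ contributes only to $\langle Z\rangle$ and so does not affect this computation).

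A short case analysis then finishes the argument. If $\epsilon_T=1$, then $1$ is already an eigenvalue of $\du\varphi$, and \Cref{Solv} gives $R(\varphi)=\infty$. If $\epsilon_T=-1$, then $AJ=-JA$, which forces $A=\bigl(\begin{smallmatrix}a&b\\b&-a\end{smallmatrix}\bigr)$ for some $a,b\in\R$; hence $A$ has real eigenvalues $\pm\sqrt{a^2+b^2}$ and $\det A=-(a^2+b^2)<0$. Combining $\epsilon_Z=\det A$ with $\epsilon_Z\in\{\pm1\}$ yields $\epsilon_Z=-1$ and $a^2+b^2=1$, so $A$ has eigenvalues $\pm1$; in particular $1$ is an eigenvalue of $\du\varphi$, and again \Cref{Solv} gives $R(\varphi)=\infty$. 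As $\varphi$ was arbitrary, $W$ has topological $R_\infty$-property. I expect the only delicate step to be the clean bookkeeping that pins down $\epsilon_Z=\det A$ and $AJ=\epsilon_T JA$ modulo the center; the genuine conceptual crux, however, is the observation that compactness of $\bar Z$ (supplied precisely by the passage to $W=\SO(2,\R)\ltimes\Hm/D$) is what converts a priori arbitrary scaling on the center into the rigid constraint $\epsilon_Z=\pm1$ that drives the case $\epsilon_T=-1$.
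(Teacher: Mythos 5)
Your proof is correct, and while it follows the same overall strategy as the paper --- show that $1$ is an eigenvalue of $\du\varphi$ and invoke \Cref{Solv}, with compactness of the central circle $Z/D$ and of $W/N\cong\SO(2,\R)$ supplying the rigidity $\epsilon_Z,\epsilon_T\in\{\pm1\}$ --- your execution is genuinely different: it is entirely infinitesimal. The paper works at the group level: it first disposes of the cases $\bar\varphi=\Id$ and $\varphi|_K=\Id$, then conjugates $\varphi$ by an inner automorphism so that it preserves the maximal torus $\SO(2,\R)\times K$ (using conjugacy of maximal tori), lifts $\varphi|_N$ to the simply connected cover $\Hm$ to extract $\det\varphi'_1=-1$ on $\Hm/Z\cong\R^2$, and finally argues that $\varphi'_1$ lies in the compact set $\Hk\,\SO(2,\R)$ of reflections, whence its eigenvalues have modulus $1$ and must be $\pm1$. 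You replace each of these steps by a bracket computation on the characteristic flag $\langle Z\rangle\subset\mathfrak n\subset\mathfrak w$: your identity $\epsilon_Z=\det A$ is the infinitesimal form of the paper's lifting/determinant argument, and $AJ=\epsilon_T JA$ is the infinitesimal form of ``$\varphi'_1$ normalizes $\SO(2,\R)$ and conjugation by it inverts it''; the explicit parametrization $A=\bigl(\begin{smallmatrix}a&b\\ b&-a\end{smallmatrix}\bigr)$ of the matrices anticommuting with $J$, with spectrum $\pm\sqrt{a^2+b^2}$ and determinant $-(a^2+b^2)$, then does the work of the paper's compactness argument. Your version buys brevity and avoids the Cartan-subgroup conjugacy and covering-group machinery altogether (no adjustment of $\varphi$ by an inner automorphism is needed, since the flag is automatically $\du\varphi$-invariant and the eigenvalues of a block-triangular map are read off the diagonal blocks); the paper's version is more structural and does not require writing down the bracket relations explicitly. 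Both arguments correctly isolate the same crux, namely that passing to the quotient by $D$ makes the center compact and thereby pins $\epsilon_Z$ to $\pm1$.
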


\begin{proof}
The Walnut group $W=\SO(2,\R)\ltimes \Hm/D$ is a 4-dimensional connected solvable Lie group whose center $K:=Z/D$, is a one-dimensional 
torus. Let $\varphi\in\Aut(W)$. We show that $R(\varphi)=\infty$. By \Cref{Solv}, it is enough to show that $1$ is an eigenvalue of $\du\varphi$. 
Let $\varphi'=\varphi|_N$ and $\bar\varphi$ be the automorphism of $W/N$ induced by $\varphi$, where $N=\Hm/D$ is the nilradical of $W$. Since 
$W/N=\SO(2,\R)$ is a one-dimensional torus, and it has only two automorphisms and $\du\bar\varphi=\pm\,\Id$. If $\bar\varphi=\Id$, then 
$R(\bar\varphi)=\infty$, and hence $R(\varphi)=\infty$. Similarly, $K$ also has only two automorphisms. Thus if $\varphi|_K=\Id$, 
then $|\Fix(\varphi)|=\infty$, and  by \Cref{Solv}, $R(\varphi)=\infty$. Now suppose $\bar\varphi(x)=x^{-1}$ for all $x\in W/N=\SO(2,\R)$, and 
$\varphi(x)=x^{-1}$ for all $x\in K$. Note that  $T:=\SO(2,\R)\times K$ is a maximal torus (maximal compact connected abelian subgroup) 
in $W$ (here, $T$ is also a Cartan subgroup of $W$). Then $\varphi(T)$ is also a maximal torus in $W$ and it is conjugate to $T$ by an element 
of $N$. Thus $i_g\circ\varphi(T)=T$, for some $g\in N$. Observe that the action of $i_g\circ\varphi$ on $W/N$ and on $Z/D$ is same as that of 
$\varphi$. As $R(\varphi)=R(i_g\circ\varphi)$ by \Cref{inner}, we may replace $\varphi$ by $i_g\circ\varphi$ and assume that 
$\varphi(T)=T$. Now for $x\in \SO(2,\R)$, $\varphi(x)=x^{-1}k$ for some $k\in K$ which depends on $x$.

Note that $\Hm$ is the simply connected covering group of $N=\Hm/D$ with the covering map given by the natural projection $p:\Hm\to \Hm/D$.  
There exists $\psi\in\Aut(\Hm)$ such that $\psi$ keeps $D$ invariant and $\varphi'=\bar\psi$ is the automorphism induced by $\psi$ on $N$. 
Let $\varphi'_1$ be the automorphism  induced by $\varphi'$ on $N/K=\R^2$.  As $N/K$ and $\Hm/Z$ are isomorphic, we may assume that 
$\varphi'_1=\psi'_1$ as an element of $\GL(2,\R)$, where $\psi'_1$ is the automorphism of $\Hm/Z$ induced by $\psi$. Since $\psi|_Z=-\Id$, 
it follows that $\det\psi'_1=-1$. 
Thus $\det\varphi'_1=-1$.

Let $\varphi_1$ be automorphism of $W/K$ induced by $\varphi$. Here, $W/K$ is isomorphic to $\SO(2,\R)\ltimes \R^2$ and $\varphi_1|_{\R^2}$ 
is same as $\varphi'_1$. As $\varphi_1(x)=x^{-1}$, $x\in\SO(2,\R)$, it follows that $\varphi'_1$ normalises $\SO(2,\R)$ in $\GL(2,\R)$, but it does 
not centralise $\SO(2,\R)$ (see Example \ref{non-ex}). Moreover as $\det\varphi'_1=-1$, we get that $\varphi'_1\in \Hk\,\SO(2,\R)$, where 
$\Hk=\{\varrho,\xi\}$ is as above and $\Hk$ consists of elements of order $2$. As $\SO(2,\R)$ is compact and $\Hk$ normalises $\SO(2,\R)$, 
we get that $\varphi'_1$ is contained in a compact subgroup of $\GL(2,\R)$, and hence all its eigenvalues have absolute value $1$. This together 
with the fact that $\det \varphi'_1=-1$, implies that the eigenvalues of $\varphi'_1$ are $1$ and $-1$. Hence $1$ is an eigenvalue of $\du\varphi_1$,  
and hence that of $\du\varphi$. This implies that $R(\varphi)=\infty$. Since this holds for all $\varphi\in\Aut(W)$, we have that $W$ has 
topological $R_\infty$-property. 
\end{proof}

For connected semisimple Lie groups $G$, we have from \Cref{ConnLie} that for some $m\in\N$, $R(\varphi^m)=\infty$ for every $\varphi\in\Aut(G)$. We also know from 
\cite{N1}, that for $n\ge 3$, $\SL(n,\R)$ and $\GL(n,\R)$ have (topological) $R_\infty$-property. For $n=2$, we have the following:

\begin{thm} \label{SL2}
$\SL(2,\R)$ and $\GL(2,\R)$ have topological $R_\infty$-property.
\end{thm}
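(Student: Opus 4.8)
The plan is to use \Cref{inner} to reduce the computation to finitely many representatives. Since $R(i_g\circ\varphi)=R(\varphi)$ for every inner automorphism $i_g$, the function $\varphi\mapsto R(\varphi)$ is constant on each coset of the group $\Inn(G)$ of inner automorphisms, so it factors through $\Aut(G)/\Inn(G)$ once that quotient is known to be finite. For $G=\SL(2,\R)$ the map $\du$ identifies $\Aut(\SL(2,\R))$ with the automorphism group $\PGL(2,\R)$ of the Lie algebra $\mathfrak{sl}(2,\R)$, realised by conjugations $A\mapsto gAg^{-1}$ with $g\in\GL(2,\R)$; here $\Inn(\SL(2,\R))$ is the identity component $\PSL(2,\R)$, namely conjugations by matrices of positive determinant, which coincide with conjugations by elements of $\SL(2,\R)$. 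Thus $\Aut(\SL(2,\R))/\Inn(\SL(2,\R))\cong\Z/2$, and it suffices to show $R(\varphi)=\infty$ for the two representatives $\Id$ and $\sigma$, where $\sigma$ is conjugation by $\varrho=\left(\begin{smallmatrix}1&0\\0&-1\end{smallmatrix}\right)$, i.e.\ $\sigma\big(\left(\begin{smallmatrix}a&b\\c&d\end{smallmatrix}\right)\big)=\left(\begin{smallmatrix}a&-b\\-c&d\end{smallmatrix}\right)$.

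For the identity, $R(\Id)$ is the number of conjugacy classes of $\SL(2,\R)$, which is infinite by \Cref{ConnLie}. For $\sigma$, I would exploit that $\varrho^2=I$, so $\sigma$ is an involution. A short computation shows that if $y=g\,x\,\sigma(g^{-1})$, then $y\,\sigma(y)=g\,x\,\sigma(x)\,g^{-1}$, so the ordinary conjugacy class of $x\,\sigma(x)$, and in particular its trace, is an invariant of the $\sigma$-twisted conjugacy class of $x$. Writing $x=\left(\begin{smallmatrix}a&b\\c&d\end{smallmatrix}\right)$ with $ad-bc=1$, one finds $\operatorname{tr}\big(x\,\sigma(x)\big)=a^2+d^2-2bc=(a-d)^2+2$, which ranges over all of $[2,\infty)$ as $x$ varies. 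Since elements of $\SL(2,\R)$ with distinct traces are never conjugate, the elements $x\,\sigma(x)$ fall into infinitely many conjugacy classes, whence $R(\sigma)=\infty$. This establishes the topological $R_\infty$-property for $\SL(2,\R)$.

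For $\GL(2,\R)$ I would pass to the characteristic subgroup $\SL(2,\R)=[\GL(2,\R),\GL(2,\R)]$, which is closed and connected, with $\GL(2,\R)/\SL(2,\R)\cong\R^*$ via the determinant. Given $\varphi\in\Aut(\GL(2,\R))$, let $\varphi'=\varphi|_{\SL(2,\R)}$ and let $\bar\varphi\in\Aut(\R^*)$ be the induced automorphism; by the case just proved, $R(\varphi')=\infty$. Now $\Aut(\R^*)\cong\R^*$: every continuous automorphism fixes the torsion subgroup $\{\pm1\}$ pointwise and acts on $\R_{>0}$ by $x\mapsto x^\lambda$ for some $\lambda\in\R^*$. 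If $\lambda=1$, i.e.\ $\bar\varphi=\Id$, then $R(\bar\varphi)=\infty$ since $\R^*$ is abelian and infinite, and \Cref{endo} gives $R(\varphi)\ge R(\bar\varphi)=\infty$. If $\lambda\neq1$, then $\Fix(\bar\varphi)=\{\pm1\}$ is finite, so \Cref{inv}\,(1) applied with $R(\varphi')=\infty$ and $|\Fix(\bar\varphi)|<\infty$ yields $R(\varphi)=\infty$. In either case $R(\varphi)=\infty$, so $\GL(2,\R)$ has the topological $R_\infty$-property.

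I expect the main obstacle to be the treatment of the outer automorphism $\sigma$ of $\SL(2,\R)$: the inner classes are handled for free by \Cref{inner} and \Cref{ConnLie}, but for $\sigma$ one must produce a genuine invariant of $\sigma$-twisted conjugacy, and the key idea is that the involutivity of $\sigma$ makes $x\mapsto x\,\sigma(x)$ intertwine twisted conjugacy with ordinary conjugacy. The remaining care is in the $\GL(2,\R)$ step, where the naive quotient argument through $\SL(2,\R)$ alone fails precisely when $\bar\varphi\neq\Id$ (for instance for $A\mapsto(A^{-1})^{T}$, which induces inversion on $\R^*$ with $R(\bar\varphi)=2$), forcing the case split above between \Cref{endo} and \Cref{inv}\,(1).
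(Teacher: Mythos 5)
Your proposal is correct, and it follows the same overall skeleton as the paper for $\SL(2,\R)$ (reduce via \Cref{inner} to the two cosets of $\Inn(\SL(2,\R))$ in $\Aut(\SL(2,\R))\cong\PGL(2,\R)$, handle the inner coset by \Cref{ConnLie}) and essentially the same argument for $\GL(2,\R)$ (restrict to the characteristic subgroup $\SL(2,\R)=[G,G]$ and split on the induced automorphism of $G/[G,G]\cong\R^*$, using \Cref{endo} in one case and \Cref{inv}\,(1) in the other; the paper invokes \Cref{Cptgenilp} to get $|\Fix(\bar\varphi)|<\infty$ where you compute $\Aut(\R^*)$ by hand, but this is cosmetic). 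The genuine difference is the key step for the outer automorphism. The paper takes the representative $i_\xi$ with $\xi=\left(\begin{smallmatrix}0&1\\1&0\end{smallmatrix}\right)$ and computes $g x_r\varphi(g^{-1})$ explicitly for unipotent $x_r$, showing each twisted class meets the unipotent subgroup $U$ in exactly one point, so the $x_r$ lie in pairwise distinct classes. You instead take the representative $i_\varrho$ (which differs from $i_\xi$ by an inner automorphism, so this is harmless) and exploit that $\sigma=i_\varrho$ is an involution: the identity $y=gx\sigma(g^{-1})\Rightarrow y\sigma(y)=g\,x\sigma(x)\,g^{-1}$ makes $\operatorname{tr}(x\sigma(x))=(a-d)^2+2$ a twisted-conjugacy invariant with infinitely many values. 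Both computations check out; yours is the more conceptual route (the standard ``norm map'' trick for involutive twists) and avoids having to verify that the candidate elements really are inequivalent, while the paper's is a self-contained matrix calculation that does not depend on $\sigma$ having order two.
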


\begin{proof} {\bf Step 1:} Let $G:=\SL(2,\R)$. It is known that $\Aut(G)$ has two connected components and $\Aut(G)^0$ consists of the 
inner automorphisms of $G$. In fact, $\Aut(G)=\PGL(2,\R)$, i.e.\ the group of inner automorphisms of $\GL(2,\R)$ restricted to $\SL(2,\R)$, 
which is normal in $\GL(2,\R)$. By \Cref{ConnLie}, $R(\Id)=\infty$, and \Cref{inner} implies that for all $g\in G$, $R(i_g)=\infty$. 
Thus, if $\varphi\in \Aut(G)^0$, $R(\varphi)=\infty$.

Now let $\varphi\in\Aut(G)$ be defined as follows: $\varphi(g):=\xi\,g\,\xi^{-1}$, $g\in G$, where $\xi\in \GL(2,\R)$ is as defined before  
\Cref{non-ex}. Moreover, $\varphi\not\in \Aut(G)^0$ as $\det \xi=-1$ and $\xi\not\in\SL(2,\R)$.  For 
$$
g=\begin{bmatrix} 
a & b\cr
c& d\cr
\end{bmatrix}\in\SL(2,\R), \ \  
\varphi(g)=\xi\,g\,\xi^{-1}=\begin{bmatrix} 
0 & 1\cr
1& 0\cr
\end{bmatrix}\cdot \begin{bmatrix} 
a & b\cr
c& d\cr
\end{bmatrix}\cdot\begin{bmatrix} 
0 & 1\cr
1& 0\cr
\end{bmatrix}=\begin{bmatrix} 
d & c\cr
b& a\cr
\end{bmatrix}.$$

Now it is enough to show that $R(\varphi)=\infty$, as for any $\psi\in\Aut(G)\mi\Aut(G)^0$, $\psi=i_x\circ\varphi$ for some $x\in G$, 
and by \Cref{inner}, $R(\psi)=R(\varphi)$.

Let $G=KA\,U$ be an Iwasawa decomposition, where $K$ is a compact connected group of rotations on $\R^2$, $A$ is the group 
of diagonal matrices in $\SL(2,\R)$ with positive entries and $U$ is the group of unipotent matrices in $\SL(2,\R)$, i.e.\
$$
U=\left\{\begin{bmatrix} 
1 & r\cr
0& 1\cr
\end{bmatrix}
\mid r\in\R\right\}.
$$
Let $x_r\in U$ for some $r\in\R$, be such that $x_r=\begin{bmatrix} 
1 & r\cr
0& 1\cr
\end{bmatrix}$. Then for $g$ as above, 
$$
gx_r\varphi(g^{-1})=\begin{bmatrix} 
a^2-b^2-abr & bd-ac+adr\cr
ac-bd-bcr& d^2-c^2+cdr\cr
\end{bmatrix}.$$
As $\det g=ad-bc=1$, we get that $U\cap [x_r]_\varphi=\{x_r\}$. As $U$ is infinite, we get that $\{[x_r]_\varphi\mid x_r\in U\}$ is infinite, 
and hence $R(\varphi)=\infty$. Thus $\SL(2,\R)$ has topological $R_\infty$-property.

\medskip
\noindent{\bf Step 2:} Now suppose $G:=\GL(2,\R)$. Then $[G,G]\subset\SL(2,\R)$. Also $\SL(2,\R)$ is its own commutator group. 
Hence $\SL(2,\R)=[G,G]$ and it is a closed (normal) characteristic subgroup of $G$. Let $\varphi\in\Aut(G)$, $\varphi':=\varphi|_{[G,G]}$ 
and $\bar\varphi$ be the automorphism of $G/[G,G]$ induced by $\varphi$. Now if $R(\bar\varphi)=\infty$, then $R(\varphi)=\infty$. 
Now suppose $R(\bar\varphi)<\infty$. Note that $G/[G,G]$ is abelian, moreover, it has only two connected components, and it is 
compactly generated. By \Cref{Cptgenilp}, $|\Fix(\bar\varphi)|<\infty$. As noted above $R(\varphi')=\infty$. Now by \Cref{inv}\,(1), 
$R(\varphi)=\infty$. Since this holds for any $\varphi\in\Aut(G)$, we get that  $\GL(2,\R)$ has topological $R_\infty$-property.
\end{proof}

It seems possible to show that any abstract automorphism of $\SL(2,\R)$ is continuous, and hence it would imply that it has 
$R_\infty$-property. As our main focus is on the topological $R_\infty$-property for Lie groups, we will not discuss this here.

Consider the $3$-dimensional Heisenberg group $\Hm$. The center $Z$ of $\Hm$ is isomorphic to $\R$ and $G/Z$ is isomorphic 
to $\R^2$. There is a canonical action of $\GL(2,\R)$ on $\Hm$ which keeps the center $Z$ invariant and it acts on $G/Z$ as the 
usual action of $\GL(2,\R)$ on $\R^2$. For the sake of completeness, we define this action here. For simplicity, we denote a generic 
element $(y_{ij})$ in $\Hm$ by a tuple $(a,b,c)$ where $a=y_{12}$, $b=y_{23}$ and $c=y_{13}$. For $\psi=(x_{ij})\in \GL(2,\R)$ 
and $(a,b,c)\in\Hm$, let 
 $$
 \psi(a,b,c)= (x_{11}a+x_{12}b, x_{21}a+x_{22}b, c\det\psi-\frac{1}{2}ab\det\psi+\frac{1}{2}(x_{11}a+x_{12}b)(x_{21}a+x_{22}b)).$$
 This defines a continuous group action of $\GL(2,\R)$ on $\Hm$ by automorphisms and it keeps the center $Z$ invariant. Recall that 
 $\SL^{\pm}(n,\R)$ is the subgroup of $\GL(n,\R)$ consisting of all matrices of determinant $\pm\,1$, $n\in\N$. The action of $\SL^{\pm}(2,\R)$ 
 on $\Hm$ (which is the restriction of the action mentioned above) is such that it acts as $\pm\,\Id$ on the center $Z$ and $\SL(2,\R)$ acts 
 trivially on $Z$. Therefore, the action of $\SL^{\pm}(2,\R)$ keeps any discrete infinite subgroup $D$ of $Z$ invariant. Let 
 $\widetilde{\SL(n,\R)}$ denote the universal covering of $\SL(n,\R)$, $n\in\N\mi\{1\}$. We will consider some Lie groups below, 
 some of which are neither solvable nor semisimple. 
 
Using Theorems \ref{SL2} and \ref{Cptgenilp}, and Theorem 1 of \cite{N1} and the structure of groups, we get the following.

\begin{cor} \label{PSL2} The following groups have topological $R_\infty$-property: 
\begin{enumerate}
\item[$(1)$] $\GL(n,\R)\ltimes\R^n$, $n\geq 2$, and $\GL(2,\R)\ltimes \Hm$. 
\item[$(2)$]  $\SL(n,\R)\ltimes\R^n$, $n\geq 2$,  $\SL(2,\R)\ltimes \Hm$ and $\SL(2,\R)\ltimes \Hm/D$. 
\item[$(3)$] $\SL^{\pm}(n,\R)$ and $\SL^{\pm}(n,\R)\ltimes\R^n$, $n\geq 2$, $\SL^{\pm}(2,\R)\ltimes \Hm$ and $\SL^{\pm}(2,\R)\ltimes \Hm/D$.
\item[$(4)$] $\widetilde{\SL(2,\R)}$, $\PSL(2,\R)$ and $\PGL(2,\R)$.
\item[$(5)$] $\widetilde{\SL(n,\R)}$, $n\ge 3$, $n$ odd. 
\end{enumerate}
\end{cor}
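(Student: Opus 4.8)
The plan is to deduce every group in the list from groups already known to have topological $R_\infty$-property --- namely $\SL(2,\R)$ and $\GL(2,\R)$ (\Cref{SL2}) and $\SL(n,\R)$, $\GL(n,\R)$ for $n\ge 3$ (Theorem 1 of \cite{N1}) --- by repeatedly applying two transfer principles: \Cref{endo} (if a characteristic subgroup $H$ has $G/H$ with the property, then so does $G$) and \Cref{inv}, parts (1) and (4) (for characteristic subgroups that are finite, or of finite index). I would establish the groups in an order so that each reduction lands on a group already handled.

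I would first fix the \emph{base} reductive groups. For $\SL^{\pm}(n,\R)$, the subgroup $\SL(n,\R)$ is its identity component, equivalently its commutator subgroup, hence characteristic of index $2$; since $\SL(n,\R)$ has the property, \Cref{inv}\,(1) applies with $|\Fix(\bar\varphi)|\le 2<\infty$ to force $R(\varphi)=\infty$ for every $\varphi$. With $\GL(n,\R)$, $\SL(n,\R)$ and $\SL^{\pm}(n,\R)$ in hand, all the semidirect products $Q\ltimes V$ in (1)--(3), with $V\in\{\R^n,\Hm,\Hm/D\}$, follow from \Cref{endo}: for each listed action the factor $V$ is the nilradical of $Q\ltimes V$ --- it is connected, normal and nilpotent, and cannot be enlarged because the scalar (or determinant) part of $Q$ acts on $V$ by genuine dilations, destroying nilpotency --- so $V$ is characteristic, $(Q\ltimes V)/V\cong Q$ has the property, and the property lifts.

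Next I would treat the centre quotients in (4) and (5). For $\PGL(2,\R)$, which is the centreless adjoint group with every automorphism inner, \Cref{ConnLie} gives $R(\Id)=\infty$ and \Cref{inner} gives $R(i_g)=\infty$ for all $g$, so the property is immediate. For $\PSL(2,\R)=\SL(2,\R)/\{\pm I\}$ the subgroup $\{\pm I\}$ is finite, central and characteristic, so any $\varphi\in\Aut(\PSL(2,\R))$ induced by some $\tilde\varphi\in\Aut(\SL(2,\R))$ satisfies $R(\varphi)=\infty$ by \Cref{inv}\,(4), using $R(\tilde\varphi)=\infty$ from \Cref{SL2}. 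The universal cover $\widetilde{\SL(2,\R)}$ then comes from \Cref{endo}, its centre being isomorphic to $\Z$, characteristic, with quotient $\PSL(2,\R)$. Finally, for (5) with $n$ odd and $n\ge 3$ the centre of $\SL(n,\R)$ is trivial, so $\PSL(n,\R)=\SL(n,\R)$ while the centre of $\widetilde{\SL(n,\R)}$ is the \emph{finite} group $\pi_1(\SL(n,\R))\cong\Z_2$; quotienting by it returns $\SL(n,\R)$, which has the property by \cite{N1}, and \Cref{inv}\,(4) transfers it back.

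The main obstacle I expect is the $\PSL(2,\R)$ step, where \Cref{inv}\,(4) demands that the given automorphism of the quotient be induced from one of $\SL(2,\R)$, yet automorphisms of a quotient need not lift in general. The work is to verify that the restriction map $\Aut(\SL(2,\R))\to\Aut(\PSL(2,\R))$ is surjective --- either by citing $\Aut=\PGL(2,\R)$ for both groups and observing that the realising conjugations preserve $\SL(2,\R)$, or by lifting a given automorphism to the universal cover and checking that it preserves the index-$2$ central subgroup cutting out $\SL(2,\R)$. A secondary, more routine, point is the identification of $V$ as the nilradical in (1)--(3), i.e.\ confirming that the normal factor is characteristic in each semidirect product.
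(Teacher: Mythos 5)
Your proposal is correct and follows the paper's strategy almost verbatim: \Cref{endo} applied to the nilradical for the semidirect products in (1)--(3), \Cref{inv}\,(1) for the index-two extensions such as $\SL^{\pm}(n,\R)$, and lifting automorphisms through the covering $\widetilde{\SL(2,\R)}\to\SL(2,\R)$ together with \Cref{inv}\,(4) for $\PSL(2,\R)$ --- the lifting issue you flag is resolved in the paper exactly as you suggest, via the universal cover. You deviate in two small places. For $\widetilde{\SL(2,\R)}$ you quotient by the full centre $\Z$ to land on $\PSL(2,\R)$, whereas the paper quotients by $\ker p$ (which has index two in the centre and is characteristic because $\Aut(\Z)=\{\pm\,\Id\}$) to land directly on $\SL(2,\R)$; both work, yours at the cost of needing $\PSL(2,\R)$ first. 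For $\PGL(2,\R)$ you argue that every automorphism is inner and invoke \Cref{ConnLie} plus \Cref{inner}; this can be made to work but is imprecise as stated: $\PGL(2,\R)$ has two connected components (it is not the adjoint group $\PSL(2,\R)$), so \Cref{ConnLie} does not apply to it verbatim, and the assertion that all of its automorphisms are inner itself requires an argument (restrict to the identity component and use that the centraliser of $\PSL(2,\R)$ in $\PGL(2,\R)$ is trivial). The paper's route avoids both points: $\PSL(2,\R)$ is the identity component of $\PGL(2,\R)$, hence a characteristic subgroup of index two with the property, and \Cref{inv}\,(1) transfers it.
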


\begin{proof} 
We know from \Cref{SL2} (resp.\ \cite{N1}) that $\SL(2,\R)$ and $\GL(2,\R)$ (resp.\ $\SL(n,\R)$ and $\GL(n,\R)$, $n\ge 3$) have 
topological $R_\infty$-property. The proper closed connected normal subgroup in each of the groups in (1) as well as (2) is the 
nilradical of that group, and hence characteristic in it. Hence by \Cref{endo}, the assertion holds for all groups in {\bf (1)} and {\bf (2)}.

\smallskip
\noindent{\bf (3):} For $n\geq 2$, the connected component of the identity in $\SL^{\pm}(n,\R)$  (resp.\ $\SL^{\pm}(n,\R)\ltimes \R^n)$ is $\SL(n,\R)$  
(resp.\  $\SL(n,\R)\ltimes \R^n$) which is a (characteristic) subgroup of index $2$ and has topological $R_\infty$-property by \Cref{SL2} 
(resp.\ by (2) above). Hence by \Cref{inv}\,(1), $\SL^{\pm}(n,\R)$ and $\SL^{\pm}(n,\R)\ltimes\R^n$ have  topological $R_\infty$-property, 
$n\geq 2$. Similarly,  $\SL^{\pm}(2,\R)\ltimes \Hm$ (resp.\ $\SL^{\pm}(2,\R)\ltimes \Hm/D$) has topological $R_\infty$-property, as it has 
$\SL(2,\R)\ltimes \Hm$ (resp.\ $\SL(2,\R)\ltimes \Hm/D)$ as a subgroup of index $2$ with the same property.

\smallskip
\noindent{\bf (4):} Let $G:=\widetilde{\SL(2,\R)}$. Then the center $Z$ of $G$ is isomorphic to $\Z$ and it is characteristic in $G$. Let 
$p:G\to\SL(2,\R)$ be the covering map. Then $\ker p$ is a subgroup of index $2$ in $Z$. Let $\varphi\in\Aut(G)$. Then $\varphi$ keeps 
$Z$ invariant. Now the only automorphisms of $\Z$ are $\pm\,\Id$. Therefore, $\varphi$ keeps any subgroup of $Z$ invariant, in particular, 
it keeps $\ker p$ invariant. Therefore, $\varphi$ induces an automorphism $\bar\varphi$ on $G/\ker p$ which is isomorphic to  $\SL(2,\R)$. 
By \Cref{SL2}, $R(\bar\varphi)=\infty$, and hence $R(\varphi)=\infty$. Thus $\widetilde{\SL(2,\R)}$ has topological $R_\infty$-property.

Note that $\PSL(2,\R)=\SL(2,\R)/\{I,-I\}$, where $I$ is the identity matrix in $\SL(2,\R)$ and $D=\{I, -I\}$ is the center of $\SL(2,\R)$. For 
$G=\widetilde{\SL(2,\R)}$ and its center $Z$ as above, $\PSL(2,\R)$ is isomorphic to $G/Z$ and $G$ is a covering group of $\PSL(2,\R)$. 
Moreover, any automorphism of $\PSL(2,\R)$ lifts to an automorphism of $G$, and any automorphism of $G$ induces an 
automorphism of $\PSL(2,\R)$ as well as that of $\SL(2,\R)$. Thus, given an automorphism $\psi$ of $\PSL(2,\R)$, there is an automorphism 
$\alpha$ of $\SL(2,\R)$ such that $\bar\alpha=\psi$, i.e.\ $\psi$ is induced by $\alpha$. Now as $R(\alpha)=\infty$ and $D$ is finite, by 
\Cref{inv}\,(4), $R(\psi)=\infty$. Since this holds for all $\psi\in\Aut(\PSL(2,\R))$, we have that $\PSL(2,\R)$ has topological $R_\infty$-property.

Note that $\PGL(2,\R)$ is the quotient group of $\GL(2,\R)$ modulo its center $\{r I\mid r\in\R\mi\{0\}\}$. Then $\PSL(2,\R)$ is a connected 
(normal) characteristic subgroup of index $2$ in $\PGL(2,\R)$. By \Cref{inv}\,(1), $\PGL(2,\R)$ has topological $R_\infty$-property.

\smallskip
\noindent{\bf (5):} Let $n\geq 3$ be such that $n$ is odd. Then the center of $\SL(n,\R)$ is trivial. Let $G:=\widetilde{\SL(n,\R)}$ and let 
$p:G\to \SL(n,\R)$ be the covering map. Then $\ker p$ is the center of $G$ which is invariant under all automorphisms of $G$. Now since 
$G/\ker p$ is isomorphic to $\SL(n,\R)$, which has topological $R_\infty$-property, it follows from \Cref{endo} that $G$ also has this property. 
\end{proof}

\begin{remark} One can take any central subgroup $H$ in $\widetilde{\SL(2,\R)}$ then $H$ is of the form $H_n:=n\Z$, $n\in\N$,  
and the groups $G_n:=\widetilde{\SL(2,\R)}/H_n$ have topological $R_\infty$-property. Note that for the center $Z_n$ of $G_n$, $G_n/Z_n$ 
is isomorphic to $\PSL(2,\R)$, which has topological $R_\infty$-property by Corollary {\rm\ref{PSL2}\,(4)}, and so does $G_n$, by Lemma {\rm{\ref{endo}}}. 
Note also that each $G_n$ is a connected semisimple Lie group with finite center of order $n$, and is a covering group of $\PSL(2,\R)$. If $n$ is even, 
then $G_n$ is a covering group of $\SL(2,\R)$. Thus, all the covering groups of $\SL(2,\R)$ as well as those of $\PSL(2,\R)$ have topological 
$R_\infty$-property. 
\end{remark}

It would be interesting to study if $\widetilde{\SL(n,\R)}$, $\PGL(n,\R)$ and $\PSL(n,\R)$ have topological $R_\infty$-property for $n\geq 4$, 
$n$ even; it would enough to prove it for $\PSL(n,\R)$. Note that for $n$ odd, both $\PGL(n,\R)$ and $\PSL(n,\R)$ are isomorphic to $\SL(n,\R)$. 
More generally, it would be interesting to study if a general connected semisimple Lie group has topological $R_\infty$-property; this would help 
in the study of this property for a general connected non-solvable Lie group.

\medskip
\noindent{\bf Acknowledgements:} We would like to thank P.\ Sankaran for valuable correspondence on the conjugacy classes of semisimple Lie 
groups. We would also like to thank T.\ Mubeena whose talk and subsequent discussions during the Indian Women and Mathematics (IWM) 
Annual Conference 2022 introduced the authors to the field. We would like to acknowledge NBHM, DAE, Govt.\ of India for support and 
IISER Pune for local hospitality during the IWM conference. Riddhi Shah would like to acknowledge the ARG-MATRICS grant from ANRF, India 
(ANRF/ARGM/2025/000695/MTR). Ravi Prakash would like to acknowledge the CSIR-JRF research fellowship
from CSIR, Govt.\ of India.

\end{document}